\title[Borderline regularity in singular FBP]{Borderline regularity in singular\\ free boundary problems}
\author[D.J. Araújo]{Dami\~ao J. Ara\'ujo}
\address{Department of Mathematics, Universidade Federal da Para\'iba, 58059-900, Jo\~ao Pessoa-PB, Brazil}
\email{araujo@mat.ufpb.br}
\author[A. Sobral]{Aelson Sobral}
\address{Applied Mathematics and Computational Sciences (AMCS), Computer, Electrical and Mathematical Sciences and Engineering Division (CEMSE), King Abdullah University of Science and Technology (KAUST), Thuwal, 23955-6900, Kingdom of Saudi Arabia}
\email{aelson.sobral@kaust.edu.sa}
\author[E. V. Teixeira]{Eduardo V. Teixeira}
\address{Department of Mathematics, Oklahoma State University, 401 Mathematical Sciences, Stillwater, OK 74078, USA}
\email{eduardo.teixeira@okstate.edu}
\author[J.M. Urbano]{Jos\'{e} Miguel Urbano}
\address{Applied Mathematics and Computational Sciences (AMCS), Computer, Electrical and Mathematical Sciences and Engineering Division (CEMSE), King Abdullah University of Science and Technology (KAUST), Thuwal, 23955-6900, Kingdom of Saudi Arabia and CMUC, Department of Mathematics, University of Coimbra, 3000-143 Coimbra, Portugal}
\email{miguel.urbano@kaust.edu.sa}
\newtheorem{theorem}{Theorem}[section]
\newtheorem{lemma}{Lemma}[section]
\newtheorem{proposition}{Proposition}[section]
\newtheorem{remark}{Remark}[section]
\newcommand{\intav}[1]{\mathchoice 
  {\mathop{\vrule width 6pt height 3 pt depth -2.5pt \kern -8pt \intop}\nolimits_{\kern -6pt#1}} 
  {\mathop{\vrule width 5pt height 3 pt depth -2.6pt \kern -6pt \intop}\nolimits_{#1}}
  {\mathop{\vrule width 5pt height 3 pt depth -2.6pt \kern -6pt \intop}\nolimits_{#1}}
  {\mathop{\vrule width 5pt height 3 pt depth -2.6pt \kern -6pt \intop}\nolimits_{#1}}}
\numberwithin{equation}{section}
\begin{document}

\subjclass[2020]{Primary 35B65. Secondary 35R35, 35A21}




\keywords{Borderline regularity, Free boundary problems, Non-differentiable functionals}

\begin{abstract}
In this paper, we investigate the borderline regularity of local minimizers of energy functionals under minimal assumptions on the potential term $\sigma$. When $\sigma$ is merely bounded and measurable, we show that sign-changing minimizers are Log-Lipschitz continuous, which represents the optimal regularity in this general setting. In the one-phase case, however, we establish gradient bounds for minimizers along their free boundaries, revealing a structural gain in regularity. Most notably, we prove that if $\sigma$ is continuous, then minimizers are of class $C^1$ along the free boundary, thereby identifying a sharp threshold for differentiability in terms of the regularity of the potential.
\end{abstract}

\date{\today}

\maketitle

\section{Introduction} \label{sct intro}

In this paper, we address borderline regularity issues for local minimizers of energy functionals of the form
\begin{equation}\label{main functional}
\mathcal{J}(u,\Omega) \coloneqq \int_{\Omega} \frac{1}{2}|Du|^2 + \sigma(u)\,dx,
\end{equation}
where $\Omega \subset \mathbb{R}^n$ is a bounded domain, and $\sigma \colon \mathbb{R} \to \mathbb{R}$ is a  bounded measurable function. Functionals of this type arise in a variety of contexts, including flame propagation, image segmentation, and biological modeling. A critical aspect of such models relies on the non-differentiability of $\sigma$ at a collection of points $\mathscr{S} \subset \mathbb{R}$, leading to the formation of free boundaries
$$\Gamma \coloneqq \partial \left [u^{-1}(\mathbb{R} \setminus \mathscr{S}) \right ] \cap \Omega.$$ 

When $\sigma$ is differentiable on a region $\mathcal{D} \subset \mathbb{R}$, it is well known that local minimizers satisfy the equation  
\begin{equation}\label{Euler-Lagangre EQ}
    \Delta u = \sigma'(u) \qquad \text{in} \quad \left\{ x \in \Omega \colon u(x) \in \mathcal{D} \right\},
\end{equation}  
in an appropriate weak sense. Some of the most prominent examples include the obstacle problem \cite{CAF1}, $\sigma(t) = t_{+}$, the Alt--Phillips (or quenching) problem \cite{AP}, $\sigma(t) = t_+^\gamma$, for $0<\gamma < 1$, the Alt--Caffarelli (or cavity) problem \cite{AC}, $\sigma(t) = \chi_{\{t > 0\}}$, the theory of flame propagation \cite{BCN, LW1}, $\sigma(t) = \sigma_\epsilon(t) = \epsilon^{-1} \beta(\epsilon^{-1} t)$, among several others.  

For non-differentiable $\sigma$, the minimization property of the functional $\mathcal{J}$ does not yield an associated Euler--Lagrange equation. Nevertheless, a significant part of the regularity theory can still be developed based solely on the variational structure, which often carries more information than the Euler--Lagrange equation \eqref{Euler-Lagangre EQ} alone. The investigation of such functionals has a long history, and many fundamental results extend to the setting of almost-minimizers or quasi-minimizers; see, for instance, \cite{GG, DT, DSS, DFFV, PT24}. 

In \cite{DeFM}, a comprehensive regularity theory was developed for local minimizers of variational energies, including sharp gradient bounds, under the assumption that the function $\sigma$ is H\"older continuous. When $\sigma$ is merely bounded, one can still derive interior regularity estimates of class $C^{0,\alpha}$, and even $C^{\text{Log-Lip}}$, which are essentially optimal in such generality, see \Cref{Reg 2-phase}. The main goal of this work, however, is to advance a borderline regularity theory for one-phase local minimizers of \eqref{main functional}, under minimal structural assumptions on the potential $\sigma$.

Our first main result shows that if $\sigma$ is merely bounded and measurable, then nonnegative local minimizers are Lipschitz continuous at the free boundary. We then go beyond this threshold: assuming $\sigma$ is continuous, we prove that minimizers are $C^1$ at the free boundary, with a modulus of continuity that depends explicitly -- through an intricate nonlinear process -- on that of $\sigma$. Crucially, no structural conditions are imposed on the continuity of $\sigma$: it need not be H\"older or Dini continuous. We construct the modulus directly, uncovering a precise mechanism through which the continuity of $\sigma$ is transmitted to the free boundary. 

These results identify the exact threshold where minimizers become more regular, showing how the structure of $\sigma$ influences the smoothness of solutions along the free boundary. A central insight of this work is identifying how this regularity arises. Near free boundary points, minimizers of \eqref{main functional} must vanish. Leveraging the scale invariance of the functional, rescaled solutions flatten in a controlled way. This flattening becomes more noticeable at smaller scales, highlighting a subtle but strong regularizing effect built into the variational structure of the problem, which ultimately leads to gradient bounds of minimizers when $\sigma$ is merely bounded.

To advance beyond Lipschitz regularity and reach $C^1-$smoothness, we introduce a nonlinear renormalization scheme. This iterative approach rescales the problem at dyadic levels, each time adjusting the normalization to preserve a smallness condition essential for convergence. A key insight is that the continuity of $\sigma$ -- regardless of its modulus -- ensures coherence across scales, allowing the scheme to bridge the gap between geometric flatness and true differentiability. This strategy not only establishes $C^1-$regularity but also reveals the critical role played by the continuity of $\sigma$ in propagating smoothness to the free boundary. This regularity phenomenon is, at least heuristically, reminiscent of the effect observed in \cite{SH03}, which was achieved through a clever application of the powerful Caffarelli-Jerison-Kenig almost monotonicity formula \cite{CJK}.

The techniques developed in this paper are robust and could be extended to a broader class of elliptic operators beyond the Laplacian. However, for the sake of clarity and to streamline the exposition, we have chosen to present our results in the context of the classical Laplace operator. This choice allows us to focus on the core ideas without the additional technical overhead that more general operators would entail.

The remainder of the paper is organized as follows. In \Cref{prelim-sect}, we introduce the mathematical framework, discuss the relevant scaling properties, and review key results from the literature that will be used throughout the paper. \Cref{Reg 2-phase} is devoted to establishing regularity estimates for the two-phase problem. Sections~\ref{LIP-sct} and~\ref{C1 - sct} focus on borderline regularity results for nonnegative minimizers: in the former, we prove Lipschitz continuity at the free boundary, while in the latter, we establish $C^1-$regularity at the free boundary under continuity assumptions on the potential $\sigma$.

\section{Preliminary results}\label{prelim-sect}

In this section, we collect some preliminary results needed for the remainder of the paper. 

\subsection{Mathematical setup} 

Let $\Omega \subset \mathbb{R}^n$ be a bounded, smooth domain and $\sigma \colon \Omega \times \mathbb{R} \to \mathbb{R}$ be a bounded measurable function in both its variables. Although our main interest lies in $\sigma-$functions without explicit dependence on $x$, as in functional \eqref{main functional}, the proof of the Log-Lipschitz estimates given in the next section requires considering such $x-$dependence as part of an auxiliary problem. For this reason, we consider this slightly more general setting.

For a nonnegative boundary datum $0 \leq \varphi \in H^1(\Omega) \cap L^\infty(\Omega)$, we consider the problem of minimizing the functional 
\begin{equation}\label{fifi}
	\mathcal{J}(v, \Omega) \coloneqq \int_{\Omega} \frac{1}{2} \left| Dv \right|^2 + \sigma(x,u)\,dx
\end{equation}
among competing functions 
$$
    v \in {\mathcal A} \coloneqq \left\{ v \in H^1(\Omega) \ \colon \ v-\varphi \in H_0^1(\Omega) \right\}. 
$$
We say $u \in \mathcal{A}$ is a minimizer of \eqref{fifi} if 
$$
    \mathcal{J}(u, \Omega) \leq \mathcal{J}(v, \Omega), \quad \forall v \in \mathcal{A}. 
$$
Note that minimizers as above are, in particular, local minimizers in the sense that, for any open subset $\Omega^\prime \subset \Omega$,
$$
    \mathcal{J}(u, \Omega^\prime) \leq \mathcal{J}(v, \Omega^\prime), \quad \forall v \in H^1(\Omega^\prime)\ \colon \ v-u \in H_0^1(\Omega^\prime).
$$
The theory developed in this article applies to both notions, but the focus of our results will, from now on, be on local minimizers.

Throughout the paper, we say that a constant $C$ is universal if it depends only on the data, namely, the dimension $n$, the $L^\infty-$norm of $\sigma$, and the boundary data $\varphi$. Any additional dependencies will be mentioned explicitly in the results' statements.

\subsection{Scaling}\label{scaling of functional}

Most of the arguments recurrently used in this paper rely on a scaling feature of the functional \eqref{fifi} that we detail in the sequel for future reference. Let $x_0 \in \Omega$ and consider two parameters $a,b \in (0,1]$. If $u\in H^1(\Omega)$ is a minimizer of $\mathcal{J}(v,B_a(x_0))$, then
\begin{equation}\label{august}
    w(x) \coloneqq \frac{u(x_0 + ax)}{b}, \quad x\in B_1
\end{equation}
is a minimizer of the functional
$$
    \tilde{\mathcal{J}}(v,B_1) \coloneqq \int_{B_1} \frac{1}{2} \left| Dv \right|^2 + \tilde{\sigma}(x,v) dx,
$$
with
$$
    \tilde{\sigma}(x,t) \coloneqq \left(\frac{a}{b} \right)^2  \sigma(x_0 + ax,bt).
$$   
Indeed, by changing variables,
$$
    \int_{B_a(x_0)} \frac{1}{2} \left| Du (x)\right|^2 + \sigma(x,u(x))\,dx  
$$
\begin{eqnarray*}
 &= &  a^{n} \int_{B_1} \frac{1}{2} \left| Du(x_0 + ay) \right|^2 + \sigma(x_0 + ay,u(x_0 + ay))\,dy\\
 &   = &  a^n \int_{B_1} \frac{1}{2} \left| \left(\frac{b}{a} \right) Dw (x) \right|^2 + \sigma(x_0 + ax, b w(x))\,dx \\
 &   = & a^{n-2}b^2 \int_{B_1} \frac{1}{2} \left|Dw (x) \right|^2 + b^{-2}a^2  \sigma(x_0 + ax, b w(x)) dx \\
 &   = & a^{n-2}b^2 \int_{B_1} \frac{1}{2} \left| Dw (x) \right|^2 + \tilde{\sigma}(x,w(x))dx.
\end{eqnarray*}
In particular, choosing $a=r$, $b=r^\beta$, for any $\beta \in [0,1]$, $0<r\leq 1$, we have 
$$
    \left|\tilde{\sigma}(x,t)\right| \leq \left|\sigma \left( x_0 + rx, r^\beta t \right) \right|.
$$
Recall that no sign restrictions are imposed on $\sigma$. 

We also need the following simple result regarding translation by affine functions.

\begin{lemma}\label{equation for rescaled function}
Let $x_0 \in B_1$ and $a,b$ be two positive constants, and define
$$
    v(x) \coloneqq \frac{(u-\ell)(x_0 + ax)}{b},
$$
where $\ell$ is an affine function. Then, $u$ is a local minimizer of \eqref{fifi} in $B_a(x_0)$, if and only if, $v$ is a local minimizer of
$$
    \int_{B_1} \frac{1}{2}|Dw|^2 + \sigma_{\ast}(x,w)\,dx,
$$
where $\sigma_{\ast}(x,t) \coloneqq b^{-2}a^{2}\sigma\left(x_0 + ax, bt + \ell(ax)\right)$.
\end{lemma}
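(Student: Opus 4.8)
The plan is to verify directly that the change of variables transforms the energy integral over $B_a(x_0)$ into the claimed functional over $B_1$, up to a positive multiplicative constant that does not affect the minimization property. The key point is that translating $u$ by an affine function $\ell$ changes its gradient only by the constant vector $D\ell$, so the Dirichlet term picks up a lower-order perturbation that can be absorbed; meanwhile the potential term is simply evaluated at the shifted argument. This is essentially the same computation already carried out in \Cref{scaling of functional} for the case $\ell \equiv 0$, so I would follow that template closely.

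First I would record the inverse relation $u(y) = b\,v\!\left(\tfrac{y-x_0}{a}\right) + \ell(y-x_0)$, or rather work with $u(x_0+ax) = b\,v(x) + \ell(ax)$, and compute $Du(x_0+ax) = \tfrac{b}{a}Dv(x) + D\ell$, using that $D\ell$ is a constant vector since $\ell$ is affine. Then I would perform the substitution $y = x_0 + ax$ in $\mathcal{J}(u, B_a(x_0))$, obtaining a factor $a^n$ from the volume element and expanding
$$
\frac12\left|\tfrac{b}{a}Dv + D\ell\right|^2 = \frac{b^2}{2a^2}|Dv|^2 + \frac{b}{a}\,Dv\cdot D\ell + \frac12|D\ell|^2.
$$
Multiplying through by $a^n$ and then by $a^{2-n}b^{-2}$ (a positive constant) to normalize the Dirichlet term, the cross term becomes $\tfrac{a}{b}\,Dv\cdot D\ell$, which is $\mathrm{div}\!\left(\tfrac{a}{b}(D\ell)\, v\right)$ — a pure divergence — and the term $\tfrac12|D\ell|^2$ is independent of $v$; both are therefore invariant under perturbations $w - v \in H_0^1(B_1)$ and drop out of the comparison defining local minimality. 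What remains is exactly $\int_{B_1} \tfrac12|Dv|^2 + \sigma_\ast(x,v)\,dx$ with $\sigma_\ast(x,t) = b^{-2}a^2\,\sigma(x_0+ax, bt + \ell(ax))$.

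For the converse direction, I would note that the change of variables is a bijection between the relevant affine classes: $w - v \in H_0^1(B_1)$ corresponds bijectively to $\tilde w - u \in H_0^1(B_a(x_0))$ via the same formula, and the identity above holds with $u$ replaced by the competitor $\tilde w$, so minimality of $v$ transfers back to minimality of $u$. The only mild subtlety — hardly an obstacle — is making precise that adding a divergence-form term and a $v$-independent constant to the integrand does not alter which functions are local minimizers; this follows because for any $w$ with $w - v \in H_0^1(B_1)$ one has $\int_{B_1} Dw\cdot D\ell = \int_{B_1} Dv \cdot D\ell$ by the divergence theorem (or integration by parts against the constant field $D\ell$). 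I do not anticipate any real difficulty here; the statement is a routine but useful bookkeeping lemma, and the proof is a few lines of the scaling computation already displayed above, carefully tracking the affine shift.
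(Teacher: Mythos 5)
Your proposal is correct and follows essentially the same route as the paper: both arguments reduce to the observation that the cross term $\int_{B_1} Dv\cdot D\ell\,dx$ is unchanged when $v$ is replaced by any competitor $w$ with $w-v\in H^1_0(B_1)$ (equivalently, $\int_{B_1}D(v-w)\cdot D\ell\,dx=0$ by integration by parts against the constant field $D\ell$), while the $v$-independent term $\tfrac12|D\ell|^2$ and the positive normalizing factor are irrelevant to minimality. The only cosmetic difference is that the paper first invokes the scaling feature to reduce to $a=b=1$, $x_0=0$ and then compares $u$ against competitors of the form $w+\ell$, whereas you carry the rescaling through explicitly.
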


\begin{proof}
By the scaling feature of the functional, it is enough to consider $a=b=1$ and $x_0 = 0$. We show that if $w$ is such that $(v-w) \in H^1_0(B_1)$, then 
\begin{equation} \label{HongKong}
    \int_{B_1} \frac{1}{2}|Dv|^2 + \sigma_{\ast}(x,v)\,dx \leq \int_{B_1} \frac{1}{2}|Dw|^2 + \sigma_{\ast}(x,w)\,dx.
\end{equation}
Observe that the function $\overline{w} = w + \ell$ is a valid competitor in the minimization problem for $u$, and so
$$
    \int_{B_1}\frac{1}{2}|Du|^2 + \sigma(x,u)\,dx \leq \int_{B_1}\frac{1}{2}|D(w + \ell)|^2 + \sigma(x,w + \ell)\,dx.
$$
In terms of $v$, this reads
$$
    \int_{B_1}\frac{1}{2}|D(v + \ell)|^2 + \sigma_{\ast}(x,v)\,dx \leq \int_{B_1}\frac{1}{2}|D(w + \ell)|^2 + \sigma_{\ast}(x,w)\,dx,
$$
which implies \eqref{HongKong}. Indeed, the extra term
$$
    \int_{B_1}D(v-w) \cdot D\ell\, dx
$$
vanishes, which follows from integrating by parts and using that $v - w \in H^1_0(B_1)$, and that $\Delta \ell = 0$ because $\ell$ is affine. 
\end{proof}

\subsection{Useful estimates}

We gather here some useful estimates, which can be found in \cite[Lemma 2.4 and Lemma 4.1, respectively]{LQT}. We adjust the statements of the lemmata to fit the setup treated here. 

Given a ball $B_R(x_0) \Subset \Omega$, we denote the harmonic replacement of $u$ in $B_R(x_0)$ by $h$, \textit{i.e.}, $h$ is the solution of the boundary value problem
$$
	\Delta h=0 \; \text{ in }\; B_R(x_0)  \quad \text{ and } \quad h-u \in H^1_0(B_R(x_0)).
$$ 
By the maximum principle, we have
\begin{equation}\label{harmonic}
\|h\|_{L^\infty(B_R(x_0))} \leq \|u\|_{L^{\infty}(B_R(x_0))}.
\end{equation}

\begin{lemma}\label{ineq for harmonic replac}
Let $\psi \in H^1(B_R)$ and $h$ be the harmonic replacement of $\psi$ in $B_R$. There exists $c$, depending only on $n$, such that 
\begin{equation}
c\int\limits_{B_R}|D\psi-Dh|^2 \, dx \leq \int\limits_{B_R}|D\psi|^2-|Dh|^2 \, dx. 
\end{equation}
\end{lemma}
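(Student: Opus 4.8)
The plan is to exploit the variational characterization of the harmonic replacement. By the Dirichlet principle, $h$ is the unique minimizer of the Dirichlet energy $v \mapsto \int_{B_R} |Dv|^2\,dx$ among all $v \in H^1(B_R)$ with $v - \psi \in H^1_0(B_R)$; equivalently, $h$ is weakly harmonic, so that $\int_{B_R} Dh \cdot D\phi\,dx = 0$ for every $\phi \in H^1_0(B_R)$. Since $\psi - h \in H^1_0(B_R)$ by construction, this gives the orthogonality relation
$$\int_{B_R} Dh \cdot D(\psi - h)\,dx = 0.$$

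With this in hand, I would decompose $D\psi = (D\psi - Dh) + Dh$ and expand the square pointwise:
$$|D\psi|^2 - |Dh|^2 = |D\psi - Dh|^2 + 2\,Dh \cdot (D\psi - Dh).$$
Integrating over $B_R$ and using the orthogonality relation to annihilate the cross term yields
$$\int_{B_R} |D\psi|^2 - |Dh|^2 \, dx = \int_{B_R} |D\psi - Dh|^2 \, dx,$$
which is the asserted inequality with $c = 1$, hence with any $c \in (0,1]$ if one wishes to keep the statement phrased with an $n$-dependent constant for uniformity with later uses. (In particular, this shows the RHS of the lemma is nonnegative, recovering $\int|Dh|^2 \le \int|D\psi|^2$.)

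I do not expect a genuine obstacle here; the only point deserving a line of justification is the orthogonality relation itself. It follows from the density of $C_c^\infty(B_R)$ in $H^1_0(B_R)$ together with the facts that $h$ solves the stated Dirichlet problem and $h - \psi \in H^1_0(B_R)$ — both valid since $B_R$ is a smooth domain. Alternatively, one can bypass weak harmonicity entirely: the map $t \mapsto \int_{B_R} |D(h + t(\psi - h))|^2\,dx$ is a quadratic polynomial in $t$ minimized at $t = 0$, so differentiating at $t=0$ produces exactly the orthogonality identity. Either route closes the argument in a couple of lines.
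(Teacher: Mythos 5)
Your argument is correct, and in fact sharper than what the statement requires: the orthogonality relation $\int_{B_R} Dh\cdot D(\psi-h)\,dx=0$, valid because $h$ is weakly harmonic and $\psi-h\in H^1_0(B_R)$, turns the claimed inequality into the exact Pythagorean identity
$$\int_{B_R}|D\psi|^2-|Dh|^2\,dx=\int_{B_R}|D\psi-Dh|^2\,dx,$$
so $c=1$ works and the constant is not genuinely dimension-dependent. The paper itself does not prove this lemma; it imports it from \cite{LQT} (Lemma 2.4 there), and your derivation is the standard variational argument one would expect behind that citation, so there is nothing to reconcile. Both of your suggested justifications of the orthogonality relation (density of $C_c^\infty$ in $H^1_0$, or first variation of the quadratic map $t\mapsto\int|D(h+t(\psi-h))|^2$) are valid and complete.
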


\begin{lemma}\label{decay in terms of harmonic replac}
Let $\psi \in H^1(B_R)$ and $h$ be the harmonic replacement of $\psi$ in $B_R$. Given $\beta \in (0,1)$, there exists $C$, depending only on $n$ and $\beta$, such that 
\begin{eqnarray*}
\int\limits_{B_r}|D\psi-(D\psi)_r|^2 \, dx & \leq & C\left( \frac{r}{R}\right)^{n+2\beta} \int\limits_{B_R}|D\psi-(D\psi)_R|^2 \, dx \\
& & + C\int\limits_{B_R}|D\psi-Dh|^2 \, dx,
\end{eqnarray*}
for each $0<r \leq R$. 
\end{lemma}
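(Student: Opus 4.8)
The plan is to establish a Campanato-type decay estimate by comparing $\psi$ with its harmonic replacement $h$ and exploiting the interior smoothness of harmonic functions; throughout, the quantity $\int_{B_R}|D\psi-Dh|^2$ will play the role of a ``defect'' measuring how far $\psi$ is from being harmonic. \emph{Step 1.} First I would record the classical interior decay estimate for harmonic functions: if $w$ is harmonic in $B_R$, then for every $0<r\le R$,
\[
\int_{B_r}\bigl|w-(w)_r\bigr|^2\,dx \;\le\; C(n)\Bigl(\tfrac rR\Bigr)^{n+2}\int_{B_R}\bigl|w-(w)_R\bigr|^2\,dx,
\]
where $(w)_\rho$ denotes the mean of $w$ over $B_\rho$. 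When $R/2<r\le R$ this is trivial, since $(r/R)^{n+2}$ is then bounded below by a dimensional constant; when $r\le R/2$ it follows by applying the interior gradient estimate to the harmonic function $w-(w)_R$ and then Poincar\'e's inequality on $B_r$. Since $h$ is harmonic, so is each partial derivative $\partial_i h$, and applying the above to $w=\partial_i h$ and summing over $i$ yields
\[
\int_{B_r}\bigl|Dh-(Dh)_r\bigr|^2\,dx \;\le\; C(n)\Bigl(\tfrac rR\Bigr)^{n+2}\int_{B_R}\bigl|Dh-(Dh)_R\bigr|^2\,dx.
\]

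\emph{Step 2.} Next I would split $D\psi=Dh+\eta$ with $\eta:=D\psi-Dh$, so that $(D\psi)_r=(Dh)_r+(\eta)_r$ and hence $D\psi-(D\psi)_r=\bigl(Dh-(Dh)_r\bigr)+\bigl(\eta-(\eta)_r\bigr)$. Using $|a+b|^2\le 2|a|^2+2|b|^2$ and the fact that the average is the $L^2$-best constant approximation,
\[
\int_{B_r}\bigl|D\psi-(D\psi)_r\bigr|^2 \;\le\; 2\int_{B_r}\bigl|Dh-(Dh)_r\bigr|^2 + 2\int_{B_r}|\eta|^2 \;\le\; 2\int_{B_r}\bigl|Dh-(Dh)_r\bigr|^2 + 2\int_{B_R}|D\psi-Dh|^2.
\]
For the first term I insert the harmonic decay from Step~1 and then bound $\int_{B_R}|Dh-(Dh)_R|^2$ back in terms of $\psi$: again by $L^2$-minimality of the mean, $\int_{B_R}|Dh-(Dh)_R|^2\le\int_{B_R}|Dh-(D\psi)_R|^2\le 2\int_{B_R}|Dh-D\psi|^2+2\int_{B_R}|D\psi-(D\psi)_R|^2$. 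Combining everything,
\[
\int_{B_r}\bigl|D\psi-(D\psi)_r\bigr|^2\,dx \;\le\; C(n)\Bigl(\tfrac rR\Bigr)^{n+2}\int_{B_R}\bigl|D\psi-(D\psi)_R\bigr|^2\,dx + C(n)\int_{B_R}|D\psi-Dh|^2\,dx,
\]
and since $0<r\le R$ and $\beta\in(0,1)$ we have $(r/R)^{n+2}\le(r/R)^{n+2\beta}$, which gives the stated inequality (with a constant that can in fact be taken independent of $\beta$).

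I do not expect any real obstacle here: the only quantitative ingredient is the classical interior decay estimate for harmonic functions in Step~1, and everything else is the triangle inequality combined with the $L^2$-minimality of averages and the harmonicity of $h$ (its boundary values are irrelevant for this lemma). The point worth keeping in mind for the applications is that the defect term $\int_{B_R}|D\psi-Dh|^2$ is exactly the quantity controlled by \Cref{ineq for harmonic replac} through the energy gap $\int_{B_R}|D\psi|^2-|Dh|^2$, so that this lemma feeds directly into a Campanato-space iteration governing the mean oscillation of the gradient.
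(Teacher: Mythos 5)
Your proof is correct: the paper does not prove this lemma but simply cites \cite[Lemma 4.1]{LQT}, and your argument --- decomposing $D\psi$ as $Dh$ plus the defect, using the interior decay of the harmonic functions $\partial_i h$, and the $L^2$-minimality of averages --- is exactly the standard Campanato-type argument behind that reference. Your observation that the estimate actually holds with exponent $n+2$ and a constant independent of $\beta$ is also accurate.
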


We conclude this subsection with the $L^2$ to $L^\infty$ estimates. We refer to \cite[Theorem 7.2]{EG} for a proof.

\begin{lemma}\label{2 to infty bddness}
Assume $u$ is a local minimizer to \eqref{main functional} in $B_1$ with $\sigma$ bounded. Then, there exists a universal constant $C>0$ such that
$$
    \|u\|_{L^\infty(B_{1/2})} \leq C\|u\|_{L^2(B_1)}.
$$
\end{lemma}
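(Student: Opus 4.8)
The plan is to prove this by the classical De Giorgi--Stampacchia truncation method. The only genuinely delicate point is that, since $\sigma$ is merely bounded and measurable, no Euler--Lagrange equation is available, so the Caccioppoli-type inequality on superlevel sets must be extracted directly from the minimality property through a well-chosen competitor.

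\emph{Step 1 (Caccioppoli inequality on superlevel sets).} Fix radii $\tfrac12 \le \rho < R \le \tfrac34$, a cutoff $\zeta \in C_c^\infty(B_R)$ with $0 \le \zeta \le 1$, $\zeta \equiv 1$ in $B_\rho$ and $|D\zeta| \le C/(R-\rho)$, and a level $k \ge 0$. Since $\zeta$ vanishes near $\partial B_1$, the function $w \coloneqq u - \zeta^2(u-k)_+$ satisfies $w - u \in H_0^1(B_1)$ and is therefore an admissible competitor for the local minimality of $u$. As $w = u$ outside the set $A \coloneqq \{u>k\}\cap B_R$, minimality yields
$$
\int_A \tfrac12\bigl(|Du|^2 - |Dw|^2\bigr)\,dx \;\le\; \int_A \bigl(\sigma(w) - \sigma(u)\bigr)\,dx \;\le\; 2\|\sigma\|_{L^\infty}\,|A|.
$$
On $A$ one has $D(u-k)_+ = Du$, hence $Dw = (1-\zeta^2)Du - 2\zeta(u-k)_+ D\zeta$ there, and expanding,
$$
|Du|^2 - |Dw|^2 \;\ge\; \zeta^2|Du|^2 + 4\zeta(1-\zeta^2)(u-k)_+\, D\zeta\cdot Du - 4\zeta^2|D\zeta|^2 (u-k)_+^2.
$$
Absorbing the cross term with Young's inequality and rearranging, one arrives at
$$
\int_{B_\rho}|D(u-k)_+|^2\,dx \;\le\; \frac{C}{(R-\rho)^2}\int_{B_R}(u-k)_+^2\,dx + C\|\sigma\|_{L^\infty}\,\bigl|\{u>k\}\cap B_R\bigr|,
$$
with $C=C(n)$.

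\emph{Step 2 (De Giorgi iteration).} Coupling this with the Sobolev inequality applied to $\zeta(u-k)_+$, Hölder's inequality, and the Chebyshev bound $\bigl|\{u>k\}\cap B_R\bigr| \le (k-h)^{-2}\int_{B_R}(u-h)_+^2\,dx$, one obtains for $k>h\ge 0$ and $\tfrac12\le\rho<R\le\tfrac34$ a recursive inequality for $\Phi(k,r)\coloneqq\int_{B_r}(u-k)_+^2\,dx$ in which the $\|\sigma\|_{L^\infty}$-contribution enters as a lower-order term. The standard iteration lemma, run over the radii $r_j = \tfrac12 + 2^{-j-2}$, then forces $\Phi(k_\infty,\tfrac12)=0$ for some $k_\infty \le C\bigl(\|u\|_{L^2(B_{3/4})} + \|\sigma\|_{L^\infty}\bigr)$; that is, $u \le k_\infty$ a.e.\ in $B_{1/2}$. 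Since $-u$ is a local minimizer of the functional with potential $t\mapsto\sigma(-t)$, which has the same $L^\infty$ norm, applying Step 2 to $-u$ gives the matching lower bound. Together with $\|u\|_{L^2(B_{3/4})}\le\|u\|_{L^2(B_1)}$, this yields $\|u\|_{L^\infty(B_{1/2})}\le C\bigl(\|u\|_{L^2(B_1)}+\|\sigma\|_{L^\infty}\bigr)$; after the normalization $\|\sigma\|_{L^\infty}\le 1$ permitted by the scaling of \Cref{scaling of functional}, and recalling that $\|\sigma\|_{L^\infty}$ is part of the universal data, the claimed estimate follows.

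The main obstacle is Step 1: with no equation at our disposal, the Caccioppoli inequality must be produced purely variationally, and the competitor $w$ has to be engineered so that the cost of replacing $\sigma(u)$ by $\sigma(w)$ is controlled solely by $\|\sigma\|_{L^\infty}\bigl|\{u>k\}\cap B_R\bigr|$ --- a quantity that is genuinely lower order in the iteration thanks to Chebyshev's inequality. Once this inequality is secured, Step 2 is entirely routine.
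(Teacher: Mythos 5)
Your argument is correct and is precisely the De Giorgi truncation proof behind the reference the paper gives for this lemma (\cite[Theorem 7.2]{EG}); the paper offers no proof of its own, and your Step 1 competitor $w=u-\zeta^2(u-k)_+$ and the resulting Caccioppoli inequality are exactly how that classical result handles the absence of an Euler--Lagrange equation. The only loose point is your final sentence: the iteration genuinely produces the additive bound $\|u\|_{L^\infty(B_{1/2})}\leq C\bigl(\|u\|_{L^2(B_1)}+\sqrt{\|\sigma\|_{L^\infty}}\bigr)$, and no normalization of $\sigma$ turns this into the purely multiplicative form of the statement --- but that additive form is what the cited theorem asserts and is all the paper ever uses (it invokes the lemma only after both $\|u\|_{L^2}$ and $\|\sigma\|_{L^\infty}$ have been normalized), so the imprecision lies in the statement rather than in your proof.
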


\section{Regularity for two-phase minimizers} \label{Reg 2-phase}

In this intermediary section, we first establish, with $\sigma$ merely bounded, the interior $C^{0,\alpha}$ and $C^{0,\text{Log-Lip}}-$regularity of two-phase local minimizers of \eqref{main functional}. While similar results may be found in the literature, we include them here for two main reasons: first, to emphasize that the regularity estimates rely only on the boundedness of $\sigma$; and second, to provide a simplified and self-contained proof of the Log-Lipschitz estimate. For a H\"older continuous $\sigma$, we revisit the known $C^{1,\alpha}-$regularity result for local minimizers at the end of the section.

\begin{proposition}\label{interior holder regularity}
Let $u$ be a local minimizer of \eqref{main functional} in $B_1$. Given $\alpha \in (0,1)$, there exists a constant $C>0$, depending only on $\alpha$ and the data, such that
$$
    \|u\|_{C^{0,\alpha}(B_{1/2})} \leq C\|u\|_{L^2(B_1)}.
$$
\end{proposition}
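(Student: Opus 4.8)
The plan is a De Giorgi--Morrey argument in which comparison with harmonic replacements plays the role of the Euler--Lagrange equation that is unavailable here, since $\sigma$ is merely bounded and hence not differentiable. The goal is the Morrey-type decay
\[
\int_{B_\rho(x_0)} |Du|^2\,dx \;\le\; C\, \rho^{\,n-2+2\alpha}
\qquad\text{for all } x_0\in B_{1/2},\ 0<\rho\le \tfrac14 ,
\]
with $C$ depending on $n$, $\alpha$ and the data; the Campanato--Morrey embedding then upgrades this to $u\in C^{0,\alpha}(B_{1/2})$ with the corresponding seminorm bound, and combining with the $L^2$--$L^\infty$ estimate of \Cref{2 to infty bddness} yields the statement. (In dimension one the case $\alpha\le\tfrac12$ is immediate from the Sobolev embedding $H^1\hookrightarrow C^{0,1/2}$.)

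\emph{Comparison with the harmonic replacement.} Fix a ball $B_r(x_0)\Subset B_1$ and let $h$ be the harmonic replacement of $u$ in $B_r(x_0)$. Since $u-h\in H^1_0(B_r(x_0))$, local minimality gives $\mathcal J(u,B_r(x_0))\le \mathcal J(h,B_r(x_0))$, which rearranges to
\[
\tfrac12\int_{B_r(x_0)} |Du|^2-|Dh|^2\,dx \;\le\; \int_{B_r(x_0)} \sigma(h)-\sigma(u)\,dx \;\le\; 2\|\sigma\|_{L^\infty}\,|B_r| .
\]
By \Cref{ineq for harmonic replac} this yields the defect-of-harmonicity estimate $\int_{B_r(x_0)}|Du-Dh|^2\,dx\le C\|\sigma\|_{L^\infty}\,r^n$, which is precisely what the boundedness of $\sigma$ buys us.

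\emph{Energy decay and iteration.} Since $h$ is harmonic, each $\partial_i h$ is harmonic and $|Dh|^2$ is subharmonic; its sub-mean-value property gives $\int_{B_\rho(x_0)}|Dh|^2\le(\rho/r)^n\int_{B_r(x_0)}|Dh|^2$ for $\rho\le r$, and $\int_{B_r(x_0)}|Dh|^2\le\int_{B_r(x_0)}|Du|^2$ because harmonic functions minimize the Dirichlet energy. Hence $\phi(\rho):=\int_{B_\rho(x_0)}|Du|^2$ satisfies
\[
\phi(\rho)\;\le\; 2\Big(\tfrac{\rho}{r}\Big)^{n}\phi(r)+C\|\sigma\|_{L^\infty}\,r^{n},\qquad 0<\rho\le r .
\]
Since $n-2+2\alpha<n$ and $r^n\le r^{\,n-2+2\alpha}$ for $r\le1$, the standard iteration lemma converts this into $\phi(\rho)\le C\big[(\rho/r)^{\,n-2+2\alpha}\phi(r)+\rho^{\,n-2+2\alpha}\big]$. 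To initialize at $r=\tfrac14$ with $x_0\in B_{1/2}$, I would bound $\phi(\tfrac14)\le\int_{B_{3/4}}|Du|^2$ by comparing $u$ with its harmonic replacement $g$ on $B_{7/8}$ (so $\int_{B_{7/8}}|Du-Dg|^2\le C\|\sigma\|_{L^\infty}$) and controlling $\int_{B_{3/4}}|Dg|^2$ through the interior gradient estimate for harmonic functions together with $\|g\|_{L^\infty(B_{7/8})}\le\|u\|_{L^\infty(B_{7/8})}\le C\|u\|_{L^2(B_1)}$, the latter by \Cref{2 to infty bddness}; this gives $\int_{B_{3/4}}|Du|^2\le C(\|u\|_{L^2(B_1)}^2+\|\sigma\|_{L^\infty})$ and hence the displayed Morrey decay. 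Morrey's embedding and \Cref{2 to infty bddness} then close the proof. (An alternative to the subharmonicity step is to invoke \Cref{decay in terms of harmonic replac} with $\psi=u$; this produces an even faster decay for $Du$ and is the route to the sharper Log-Lipschitz estimate.)

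\emph{Main difficulty.} The delicate point is the comparison step: the boundedness of $\sigma$ only produces a defect of harmonicity of order $r^n$ -- the borderline amount needed for the Campanato iteration to still converge to an exponent arbitrarily close to $n$ -- so the whole scheme rests on checking that this marginal gain, obtained with no equation and no continuity of $\sigma$, propagates through the iteration to deliver $C^{0,\alpha}$ for \emph{every} $\alpha<1$. Conceptually, this is the assertion that the variational structure alone encodes enough quantitative flatness; a continuity hypothesis on $\sigma$ would make the defect $o(r^n)$ and push the argument past this threshold, which is the mechanism exploited in the finer results of the paper.
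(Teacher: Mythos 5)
Your proof is correct, but it takes a genuinely different route from the paper's. Both arguments hinge on the same engine -- comparing $u$ with its harmonic replacement and using \Cref{ineq for harmonic replac} to get the defect estimate $\int_{B_r}|Du-Dh|^2\le C\|\sigma\|_{L^\infty}r^n$ -- but you then run a Morrey decay scheme at the level of the gradient: subharmonicity of $|Dh|^2$ gives $\phi(\rho)\le 2(\rho/r)^n\phi(r)+C\|\sigma\|_{L^\infty}r^n$ for $\phi(\rho)=\int_{B_\rho}|Du|^2$, the algebraic iteration lemma converts this into decay of order $\rho^{n-2+2\alpha}$, and Morrey's embedding closes the argument. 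The paper instead works at the level of the function itself: it builds, by an explicit induction with dyadic rescalings $w_k(x)=(u(r^kx)-a_k)/r^{k\alpha}$, a sequence of constants $a_k$ with $\intav{B_{r^k}}|u-a_k|^2\le r^{2\alpha k}$, re-verifying at each step that $w_k$ minimizes a functional of the same type with $\sigma_k$ still small, and concludes via the Campanato characterization of $C^{0,\alpha}$. Your route is the classical Giaquinta--Giusti argument and is arguably shorter, since the iteration is purely algebraic and no rescaled minimizers need to be tracked; the paper's rescaling-based induction, on the other hand, is the template that is reused verbatim for the Log-Lipschitz and $C^1$ results in \Cref{flatland log lip,log lipschitz regularity,C1 regularity at free boundary}, which is presumably why it is presented in that form. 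Two small points: your initialization invokes \Cref{2 to infty bddness} on $B_{7/8}$ rather than $B_{1/2}$, which requires a routine covering/rescaling remark; and your final bound carries an additive $\sqrt{\|\sigma\|_{L^\infty}}$ term rather than the purely multiplicative form in the statement -- but the paper's own normalization step has exactly the same feature, so this is not a defect of your argument relative to theirs.
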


\begin{proof}
First, after rescaling, we may assume that $\|u\|_{L^2(B_1)} \leq 1$ and 
$$\|\sigma\|_{L^\infty(\mathbb{R})} \leq \delta,$$ 
for some small $\delta>0$. Let $h$ be the harmonic replacement of $u$ in $B_1$. Using it as a competitor for $u$ yields
$$
  \int_{B_1}|Du|^2 - |Dh|^2 \leq 2\int_{B_1}\sigma(h) - \sigma(u).  
$$
By \Cref{ineq for harmonic replac}, and since both $u$ and $h$ are bounded (by the boundary data, via the maximum principle), we have
$$
    \int_{B_1}|D(u-h)|^2 \leq C\delta.
$$
Since $u-h \in H^1_0(B_1)$, we can use Poincaré inequality in balls to obtain
$$
    \int_{B_1}|u-h|^2 \leq C\delta.
$$
Now, for $r<1$ small, we observe that 
\begin{eqnarray*}
    \int_{B_r}|u(x) - h(0)|^2 & \leq & 2\int_{B_r}|u(x) - h(x)|^2 + 2\int_{B_r}|h(x) - h(0)|^2\\
    & \leq & 2C\delta + 2C^{\prime}\|h\|^2_{L^\infty(B_1)}r^{n+2}\\
    & \leq & r^{2\alpha + n},
\end{eqnarray*}
for $r$ and $\delta$ small enough depending also on $\alpha$.

Now we iterate this reasoning. We will build a sequence $(a_k)_{k \in \mathbb{N}}$ such that, for every $k \in \mathbb{N}$, 
\begin{equation}\label{integral decay}
    \intav{B_{r^k}}|u - a_k|^2\,dx \leq r^{2\alpha k} \qquad \text{and} \qquad |a_{k} - a_{k-1}| \leq Cr^{\alpha (k-1)}.
\end{equation}
We proceed by induction. The case $k=1$ follows by considering $a_1 = h(0)$ and $a_0=0$. Now, assuming we have found $a_k$, we define the rescaled function
$$
    w_k(x) \coloneqq \frac{u(r^k x) - a_k}{r^{k\alpha}}.
$$
We point out that $w_k$ is a local minimizer of the functional
$$
    \int_{B_1}\frac{1}{2}|Dw|^2 + \sigma_k(w)\,dx,
$$
where $\sigma_k(t) = r^{2k(1-\alpha)}\sigma(r^{k\alpha}t + a_k)$. By the induction assumption, it follows that
$$
    \intav{B_1}|w_k|^2\,dx \leq 1,
$$
and so, by \Cref{2 to infty bddness}, it follows that $w_k$ is uniformly bounded. Repeating the argument at the beginning of the induction, we find $\overline{a} = \overline{h}(0)$, where $\overline{h}$ is the corresponding harmonic replacement for $w_k$. It then follows that
$$
    \int_{B_r}|w_k - \overline{a}|^2 \leq r^{k(2\alpha+n)},
$$
which in turn means
$$
    \int_{B_{r^{k+1}}}|u - a_{k+1}|^2 \leq r^{(k+1)(2\alpha+n)},
$$
where $a_{k+1} = a_k + r^{k\alpha}\overline{a}$. By \eqref{integral decay}, the sequence $(a_k)_{k \in \mathbb{N}}$ is a Cauchy sequence, and thus converges. From here, the argument is standard.
\end{proof}

A refinement of the previous argument suggests the possibility of obtaining a Log-Lipschitz estimate. This is typically achieved by showing that $Du$ belongs to BMO, which is the energetic analogue of Log-Lipschitz continuity. Here, we present a simpler proof of the Log-Lipschitz estimate. To this end, we consider functionals of the form \eqref{fifi}, allowing for $x-$dependence in the coefficient function $\sigma$. The key observation is that, despite this additional dependence, $\sigma$ remains bounded.

We begin with the following approximation lemma.

\begin{lemma}\label{flatland log lip}
Let $u$ be a local minimizer of \eqref{fifi} in $B_1$. There exist universal parameters $r, \delta \in (0,1)$ and an affine function $\ell(x) \coloneqq a + b \cdot x$, with universally bounded coefficients such that if 
$$
    \|u\|_{L^\infty(B_1)} \leq 1 \quad \text{and} \quad \|\sigma\|_{L^\infty(\mathbb{R})} \leq \delta,
$$
then
$$
    \int_{B_r}|u - \ell|^2 \leq r^{n+2}.
$$
\end{lemma}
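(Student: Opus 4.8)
The natural approach is a compactness/contradiction argument. Suppose the conclusion fails: then for $r,\delta \to 0$ along suitable sequences, there exist local minimizers $u_j$ of $\mathcal{J}$ in $B_1$ with potentials $\sigma_j$ satisfying $\|u_j\|_{L^\infty(B_1)} \le 1$ and $\|\sigma_j\|_{L^\infty} \le \delta_j \to 0$, such that $\int_{B_r}|u_j - \ell|^2 > r^{n+2}$ for every affine $\ell$ with universally bounded coefficients — in particular for the first-order Taylor polynomial of a limiting harmonic function. First I would show that, up to a subsequence, $u_j \to u_\infty$ strongly in $L^2_{loc}(B_1)$ (and weakly in $H^1_{loc}$), and that $u_\infty$ is harmonic in $B_1$. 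The harmonicity follows by comparing $u_j$ with its own harmonic replacement $h_j$ in a fixed interior ball: using $h_j$ as a competitor and invoking \Cref{ineq for harmonic replac} gives $\int |D(u_j - h_j)|^2 \le C\delta_j \to 0$, so by Poincaré $u_j - h_j \to 0$ in $L^2$, while $h_j$ (bounded, harmonic) converges locally uniformly to a harmonic limit; hence $u_\infty$ is harmonic. One should be slightly careful that the $x$-dependence in $\sigma_j$ causes no trouble — it does not, since only the uniform bound $\|\sigma_j\|_{L^\infty} \le \delta_j$ is used in the energy comparison.

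Next, with $u_\infty$ harmonic and $\|u_\infty\|_{L^\infty(B_1)} \le 1$, interior estimates give $\|u_\infty\|_{C^2(B_{1/2})} \le C(n)$, so the affine function $\ell(x) = u_\infty(0) + Du_\infty(0)\cdot x$ has universally bounded coefficients and satisfies, by Taylor's theorem,
$$
\int_{B_r} |u_\infty - \ell|^2 \,dx \le C(n)\, r^{n+4}
$$
for all $r \in (0,1/2)$. Fix $r$ small enough (universal) that $C(n) r^{n+4} \le \tfrac{1}{4} r^{n+2}$, i.e. $r \le (4C(n))^{-1/2}$. Then for $j$ large, strong $L^2$ convergence $u_j \to u_\infty$ on $B_r$ forces $\int_{B_r}|u_j - \ell_j|^2 \le \tfrac12 r^{n+2}$, where $\ell_j$ is the Taylor polynomial built from the (bounded) coefficients that converge to those of $\ell$ — contradicting the standing assumption $\int_{B_r}|u_j - \ell|^2 > r^{n+2}$ for all admissible $\ell$. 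This yields the desired $r, \delta$ and the structural bound on the coefficients of $\ell$.

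The main obstacle is making the compactness step fully rigorous in the $x$-dependent setting — specifically, extracting the $H^1_{loc}$ bound on $u_j$ and the strong $L^2$ convergence. The $H^1$ bound comes from comparing $u_j$ to a fixed Lipschitz competitor (or again to the harmonic replacement) and using $\|\sigma_j\|_\infty \le 1$; strong $L^2_{loc}$ convergence then follows from Rellich–Kondrachov. A secondary point to handle carefully is the bookkeeping of "universally bounded coefficients": since $|u_\infty(0)| \le 1$ and $|Du_\infty(0)| \le C(n)$ by harmonic estimates, and the $u_j$-side coefficients can be taken to converge to these, one fixes the universal bound on $\ell$ accordingly. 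Everything else — the energy comparison inequalities, Poincaré, Taylor expansion — is routine and quantitative, so the proof reduces cleanly to the contradiction scheme above.
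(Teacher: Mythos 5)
Your compactness argument is essentially correct, but the paper proves the lemma by a direct, fully quantitative route that is in fact already contained in your own preliminary steps. The paper takes $h$ to be the harmonic replacement of $u$ in $B_1$, uses it as a competitor together with \Cref{ineq for harmonic replac} and Poincar\'e to get $\int_{B_1}|u-h|^2 \le C\|\sigma\|_{L^\infty}$, and then sets $\ell(x) = h(0) + Dh(0)\cdot x$ \emph{for the actual replacement $h$} (not for a limiting harmonic function), so that interior estimates give $\int_{B_r}|u-\ell|^2 \le 2C\delta + 2C_2 r^{n+4}$; one then chooses $r$ universally so that the second term is at most $\tfrac12 r^{n+2}$ and afterwards $\delta$ so that the first is too. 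This avoids any limit extraction, any $H^1_{\mathrm{loc}}$ bound beyond the single energy comparison, and the quantifier bookkeeping your contradiction scheme requires (your opening phrase ``for $r,\delta\to 0$ along suitable sequences'' is logically muddled, although you repair it later by fixing $r$ first from the harmonic estimates). What your softer approach buys is robustness — it would survive settings where the energy comparison only gives qualitative smallness — but here the direct argument is shorter, gives the explicit dependence of $\delta$ on $r$, and is the one the paper actually needs for the iteration in \Cref{log lipschitz regularity}. Both proofs are valid; yours is simply a compactified version of the paper's computation.
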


\begin{proof}
Let $r>0$ be a small parameter to be chosen later, and let $h$ be the harmonic replacement of $u$ in $B_1$. Using $h$ as a competitor for $u$ yields
$$
  \int_{B_1}|Du|^2 - |Dh|^2 \leq 2\int_{B_1}\sigma(x,h) - \sigma(x,u).  
$$
By \Cref{ineq for harmonic replac}, and since both $u$ and $h$ are bounded, we have
$$
    \int_{B_1}|D(u-h)|^2 \leq C\|\sigma\|_{L^\infty(\mathbb{R})},
$$
which, combined with the Poincar\'e inequality in balls, gives
$$
    \int_{B_1}|u-h|^2 \leq C\|\sigma\|_{L^\infty(\mathbb{R})}.
$$
Since $h$ is harmonic in $B_1$, we obtain
$$
    |h(x) - (h(0) + Dh(0)\cdot x)| \leq C_1 \|h\|_{L^\infty(B_1)} |x|^2 \quad \text{in} \ B_{1/2},
$$
and so, if we consider $\ell(x) \coloneqq h(0) + Dh(0) \cdot x$, we obtain
$$
    \int_{B_r}|u-\ell|^2 \leq 2C\delta + 2C_2 \|h\|^2_{L^\infty(B_1)} r^{n+4}.
$$
By assumption, $\|u\|_{L^\infty(B_1)} \leq 1$, which also implies $\|h\|_{L^\infty(B_1)} \leq 1$. We can then pick $r$ universally small so that
$$
    2C_2 \|h\|^2_{L^\infty(B_1)} r^{n+4} \leq \frac{r^{n+2}}{2},
$$
and then $\delta$ small so that
$$
    2C \delta \leq \frac{r^{n+2}}{2},
$$
from which the result follows.
\end{proof}

A careful iteration of the approximation lemma gives the Log-Lipschitz estimate.

\begin{proposition}\label{log lipschitz regularity}
Let $u$ be a local minimizer of \eqref{main functional} in $B_1$. Then, there exists a universal constant $C>0$ such that
$$
    |u(x) - u(y)| \leq C\|u\|_{L^2(B_1)}|x-y|\ln\left(|x-y|^{-1}\right),  \quad x,y \in B_{1/2}.
$$
\end{proposition}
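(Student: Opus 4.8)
The plan is to iterate the approximation \Cref{flatland log lip} at the geometric scales $r^{k}$, $k\in\mathbb{N}$, around an arbitrary base point of $B_{1/2}$, recording at each step the affine correction that the lemma produces. The crucial observation is that \Cref{flatland log lip} delivers an affine function whose coefficients are merely \emph{bounded}, not \emph{small}: hence the slopes of the successive affine approximants do not contract, they only accumulate, so that the natural bound on the slope after $k$ steps is of size $\asymp k$. Since the $k$-th scale is $r^{k}$ and $\ln(r^{-k})\asymp k$, this linear growth of the slopes is precisely what produces the logarithmic factor and obstructs plain Lipschitz continuity.

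First, after the preliminary scaling of \Cref{scaling of functional} combined with \Cref{2 to infty bddness} — exactly as at the start of the proof of \Cref{interior holder regularity} — we may assume $\|u\|_{L^{\infty}(B_{1})}\le 1$ and $\|\sigma\|_{L^{\infty}(\mathbb{R})}\le\delta$, with $\delta$ the universal threshold of \Cref{flatland log lip}, which moreover we arrange so that the conclusion of that lemma reads $\intav{B_{r}}|u-\ell|^{2}\,dx\le\eta\,r^{2}$ for a fixed small universal $\eta$. Since $u$ is a local minimizer in $B_{1/2}(z)$ for every $z\in B_{1/2}$, it suffices, after translating and rescaling, to control the oscillation of $u$ about the origin; the uniformity of all constants below in the base point then yields the two‑point bound for arbitrary $x,y\in B_{1/2}$ with $|x-y|$ small, the remaining range of $|x-y|$ following from the boundedness of $u$ after adjusting constants.

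The core of the argument is to construct, by induction on $k\ge 1$, affine functions $P_{k}(x)=\alpha_{k}+\beta_{k}\cdot x$, with $P_{0}\equiv 0$, such that
\begin{equation*}
\intav{B_{r^{k}}}|u-P_{k}|^{2}\,dx\le\eta\,r^{2k},\qquad |\beta_{k}|\le C_{0}\,k,\qquad |\alpha_{k}-\alpha_{k-1}|\le C_{0}\,r^{k-1},
\end{equation*}
where $C_{0}$ is a universal bound for the affine coefficients furnished by \Cref{flatland log lip}. The base case $k=1$ is \Cref{flatland log lip} applied to $u$. For the inductive step, granting the statement at level $k$, the rescaling $u_{k}(y)\coloneqq r^{-k}(u-P_{k})(r^{k}y)$ is, by \Cref{equation for rescaled function} and the scaling identities of \Cref{scaling of functional}, a local minimizer on $B_{1}$ of a functional whose potential is still bounded by $\delta$; moreover $\intav{B_{1}}|u_{k}|^{2}\,dx\le\eta$, so \Cref{2 to infty bddness} gives $\|u_{k}\|_{L^{\infty}(B_{1/2})}\le C\sqrt{\eta}\le 1$ once $\eta$ is small, and hence, after a harmless rescaling, \Cref{flatland log lip} applies to $u_{k}$ and produces an affine $\ell(y)=a+b\cdot y$ with $|a|,|b|\le C_{0}$ and $\intav{B_{r}}|u_{k}-\ell|^{2}\,dy\le\eta\,r^{2}$. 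Setting $P_{k+1}(x)\coloneqq P_{k}(x)+r^{k}\ell(r^{-k}x)$ closes the induction: a change of variables gives $\intav{B_{r^{k+1}}}|u-P_{k+1}|^{2}\,dx=r^{2k}\intav{B_{r}}|u_{k}-\ell|^{2}\,dy\le\eta\,r^{2(k+1)}$, while $\beta_{k+1}-\beta_{k}$ is the slope of $\ell$, so $|\beta_{k+1}-\beta_{k}|\le C_{0}$ and $|\beta_{k+1}|\le C_{0}(k+1)$, and $\alpha_{k+1}-\alpha_{k}=r^{k}a$, so $|\alpha_{k+1}-\alpha_{k}|\le C_{0}r^{k}$. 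In particular $(\alpha_{k})$ is Cauchy and converges to $u(0)$ (using the continuity of $u$ from \Cref{interior holder regularity}), with $|\alpha_{k}-u(0)|\le C r^{k}$.

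Finally, since $u-P_{k}$ is itself a local minimizer of a functional with potential bounded by $\delta$, \Cref{2 to infty bddness} upgrades the $L^{2}$‑decay to $\|u-P_{k}\|_{L^{\infty}(B_{r^{k}/2})}\le C r^{k}$. Given $x$ with $\rho\coloneqq|x|$ small, we pick $k=k(\rho)$ so that $\tfrac12 r^{k+1}<\rho\le\tfrac12 r^{k}$; then $r^{k}\asymp\rho$ and $k\asymp\ln(\rho^{-1})$, whence
\begin{equation*}
|u(0)-u(x)|\le |u(0)-\alpha_{k}|+|\beta_{k}\cdot x|+|P_{k}(x)-u(x)|\le C r^{k}+C_{0}\,k\,\rho+C r^{k}\le C\,\rho\,\ln(\rho^{-1}).
\end{equation*}
Recentering and undoing the normalization then give the claim. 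The step I expect to be the main obstacle is, as in \Cref{interior holder regularity}, the propagation of the smallness condition across scales: one must verify that every rescaled minimizer $u_{k}$ keeps a potential bounded by $\delta$ and an $L^{\infty}$ norm below $1$, so that \Cref{flatland log lip} reapplies with the \emph{same} universal constants, while simultaneously tracking how each affine correction transforms under the dyadic rescaling. The absence of any contraction for the slopes is not an artifact but the mechanism behind the result: over $k$ scales the slope can only be controlled by $C_{0}k$, which at the scale $r^{k}\asymp\rho$ reads $C\,\rho\,\ln(\rho^{-1})$ — exactly one logarithm worse than Lipschitz.
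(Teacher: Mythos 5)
Your proposal is correct and follows essentially the same route as the paper's proof: normalize, iterate \Cref{flatland log lip} at scales $r^k$ via the rescalings of \Cref{equation for rescaled function} and the $L^2$-to-$L^\infty$ bound of \Cref{2 to infty bddness}, and track affine approximants whose slopes grow only linearly in $k$, which at scale $r^k\asymp\rho$ yields the factor $\ln(\rho^{-1})$. The only difference is that you spell out the final two-point estimate explicitly, whereas the paper delegates that step to a citation of \cite{ET14}.
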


\begin{proof}
By scaling, we may assume $\sigma$ is in a smallness regime and that $u$ is normalized in $L^2$. The proof consists in finding a sequence of affine functions $\ell_k(x) \coloneqq a_k + b_k \cdot x$ such that
$$
    \int_{B_{r^k}}|u - \ell_k|^2 \leq r^{k(n+2)},
$$
and
\begin{equation}\label{control of coefs}
    |a_{k+1} - a_k| \leq Cr^{k} \quad \text{and} \quad |b_{k+1} - b_k| \leq C,
\end{equation}
for some $r>0$, $C>0$ universal and for every $k \in \mathbb{N}$. This is proved by induction. The first case is trivial by the normalization assumption. Assuming this holds up to $k$, we define
$$
    v_k(x) \coloneqq \frac{(u-\ell_k)(r^k x)}{r^k},
$$
which satisfies, by the induction assumption,
$$
    \intav{B_1}|v_k|^2 \leq 1.
$$
\Cref{2 to infty bddness} then implies that $v_k$ is uniformly bounded. We observe that $v_k$ minimizes a functional of a similar type by \Cref{equation for rescaled function}. We can then apply \Cref{flatland log lip} to obtain
$$
    \int_{B_r}|v_k - \ell|^2 \leq r^{n+2},
$$
which is equivalent to
$$
    \int_{B_{r^{k+1}}}\left|u(y) - \left(\ell_k(y) + r^k\ell(r^{-k}y)\right)\right|^2 \leq r^{(k+1)(2+n)}.
$$
Defining $\ell_{k+1} \coloneqq \ell_k + r^k\ell(r^{-k}-)$, it is standard to verify condition \eqref{control of coefs}. We observe now that for every $k \in \mathbb{N}$, the function $v_k$, previously defined, is a minimizer to a functional with a similar structure and normalized $L^2-$norm. We apply \Cref{2 to infty bddness} to obtain
$$
    \|v_k\|_{L^\infty(B_1)} \leq C,
$$
for some constant $C$,
depending on universal parameters. But this is equivalent to
$$
    \|u - \ell_k\|_{L^\infty(B_{r^k})} \leq Cr^k, \quad \text{for} \quad k \in \mathbb{N}.
$$
This condition, together with \eqref{control of coefs}, implies the Log-Lipschitz estimate (see \cite{ET14}).
\end{proof}

\begin{remark}
The fact that the regularity estimates depend solely on the boundedness of $\sigma$ is relevant for the compactness of minimizers across free boundary models. For instance, let $0<\gamma<1$ and $u_\gamma$ be a local minimizer of the two-phase Alt--Phillips functional
$$
    \int_{B_1} \frac{1}{2}|Dv|^2 + \Lambda_1 v_{-}^\gamma + \Lambda_2 v_{+}^\gamma \,dx,
$$
for positive constants $\Lambda_1$ and $\Lambda_2$. A consequence of Proposition \ref{log lipschitz regularity} is that the family $(u_\gamma)_{\gamma > 0}$ is pre-compact in the uniform convergence topology. This gives enough compactness to obtain, by passing to the limit as $\gamma \to 0$, a Log-Lipchitz solution of the celebrated two-phase cavity problem investigated in \cite{CJK}, via the Alt--Phillips problem.
\end{remark}

When $\sigma$ is H\"older continuous, the regularity theory reaches the $C^{1,\alpha}$ threshold. The heart of the matter is that the functional \eqref{main functional} behaves as that of the Alt--Phillips problem. We refer to \cite{DeFM} for a complete description and extension of such regularity results. The proof we include below is a simplified version of \cite[Theorem 2.1]{ASTU}, and we include it here just for the sake of completeness as a courtesy to the reader.

\begin{proposition}\label{holder sigma}
Assume $u$ is a local minimizer to \eqref{main functional} in $B_1$. If $\sigma$ is H\"older, then there is an exponent $\alpha$, depending on data and the H\"older continuity of $\sigma$, such that
$$
    \|u\|_{C^{1,\alpha}(B_{1/2})} \leq C\|u\|_{L^\infty(B_1)},
$$
for some universal constant $C>0$.
\end{proposition}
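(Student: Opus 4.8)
The plan is to establish Campanato-type decay for the gradient, with the harmonic replacement of $u$ playing the role of reference competitor and the comparison estimates of \Cref{ineq for harmonic replac,decay in terms of harmonic replac} doing the heavy lifting. Let $s\in(0,1)$ denote the H\"older exponent of $\sigma$ (if $\sigma$ is Lipschitz, fix any $s<1$). Dividing $u$ by $\|u\|_{L^\infty(B_1)}$ only replaces $\sigma$ by another bounded, $C^{0,s}$ potential, so we may assume $\|u\|_{L^\infty(B_1)}\le 1$; a standard Caccioppoli estimate for minimizers then yields $\|Du\|_{L^2(B_{3/4})}\le C$. Fix $x_0\in B_{1/2}$ and, after a translation, take $x_0=0$; for $0<R\le 1/4$ let $h$ be the harmonic replacement of $u$ in $B_R$ and set $X\coloneqq\int_{B_R}|Du-Dh|^2$.

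The key point is the power decay $X\le C R^{n+2\mu}$ with $\mu\coloneqq\tfrac{s}{2-s}\in(0,1)$, the constant $C$ depending only on the data and on $[\sigma]_{C^{0,s}}$. To obtain it, I would use $h$ as a competitor and invoke \Cref{ineq for harmonic replac}:
$$cX \;\le\; \int_{B_R}|Du|^2-|Dh|^2 \;\le\; 2\int_{B_R}\bigl(\sigma(h)-\sigma(u)\bigr) \;\le\; 2[\sigma]_{C^{0,s}}\!\int_{B_R}|h-u|^s,$$
the H\"older bound on $\sigma$ being applied on the interval $[-1,1]$, which contains the values of both $u$ and $h$ by the maximum principle \eqref{harmonic}. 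H\"older's inequality gives $\int_{B_R}|h-u|^s\le CR^{n(1-s/2)}\bigl(\int_{B_R}|h-u|^2\bigr)^{s/2}$ and, since $h-u\in H^1_0(B_R)$, the Poincar\'e inequality gives $\int_{B_R}|h-u|^2\le CR^2X$. Chaining the three bounds produces the \emph{self-improving} algebraic inequality $X\le CR^{n(1-s/2)+s}X^{s/2}$, whence $X\le CR^{[n(1-s/2)+s]/(1-s/2)}=CR^{n+2\mu}$. This gain of integrability is the heart of the matter, and it is precisely where it suffices for $\sigma$ to be merely H\"older rather than Lipschitz.

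With the decay of $X$ in hand, I would feed it into \Cref{decay in terms of harmonic replac}. Choosing $\beta\in(\mu,1)$, the quantity $\phi(r)\coloneqq\int_{B_r}|Du-(Du)_r|^2$ satisfies
$$\phi(r)\;\le\;C\left(\frac{r}{R}\right)^{n+2\beta}\phi(R)+CR^{n+2\mu},\qquad 0<r\le R\le 1/4,$$
and a standard real-variable iteration lemma then yields $\phi(r)\le Cr^{n+2\mu}$ for all small $r$, with $C$ uniform in the center $x_0\in B_{1/2}$ (using $\phi(1/4)\le C\|Du\|_{L^2(B_{3/4})}^2\le C$). By Campanato's integral characterization of H\"older spaces, this means $Du\in C^{0,\mu}(B_{1/2})$ with $[Du]_{C^{0,\mu}(B_{1/2})}\le C$; together with the $L^\infty$ bounds on $u$ and $Du$, and after undoing the normalization, this gives $\|u\|_{C^{1,\mu}(B_{1/2})}\le C\|u\|_{L^\infty(B_1)}$, so the statement holds with $\alpha=\mu=\tfrac{s}{2-s}$.

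I expect the main obstacle to be the second paragraph. The naive bound $X\le C[\sigma]_{C^{0,s}}|B_R|\,\|u-h\|_{L^\infty}^s\lesssim R^n$ only delivers $Du\in\mathrm{BMO}$, i.e.\ Log-Lipschitz regularity -- matching \Cref{log lipschitz regularity} -- and is not enough; one genuinely needs the Poincar\'e-plus-H\"older bootstrap to extract the extra power $R^{2\mu}$. The remaining work -- uniformity of the constants in $x_0$, the Caccioppoli bound, and the bookkeeping needed to present the estimate with right-hand side proportional to $\|u\|_{L^\infty(B_1)}$ -- is routine.
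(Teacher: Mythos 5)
Your proposal is correct and follows essentially the same route as the paper's proof: comparison with the harmonic replacement, the self-improving algebraic inequality for $X=\int_{B_R}|Du-Dh|^2$ yielding the decay $X\le CR^{\,n+2s/(2-s)}$, and then \Cref{decay in terms of harmonic replac} combined with Campanato's embedding. The only cosmetic difference is that you interpolate through $L^2$ and use Poincar\'e where the paper goes through $L^{2^*}$ and uses Sobolev; both give the identical exponent $\alpha=s/(2-s)$.
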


\begin{proof}
We prove the result for the case of balls $B_R(x_0) \Subset B_{1/2}$.
Without loss of generality, assume $x_0=0$ and denote $B_R\coloneqq B_R(0)$. 
Since $u$ is a local minimizer, by testing \eqref{main functional} against its harmonic replacement, we obtain the inequality
\begin{equation} \label{fermi}
\displaystyle\int\limits_{B_R}|Du|^2-|Dh|^2 \, dx  \leq  2 \displaystyle\int\limits_{B_R}  \sigma(h(x))-\sigma(u(x)) \, dx.
\end{equation}
By the H\"older assumption on $\sigma$ we get
$$
    \displaystyle\int\limits_{B_R}  \sigma(h(x))-\sigma(u(x)) \, dx \leq C \int\limits_{B_R} |u(x)-h(x)|^{\gamma}\, dx,
$$
for some $\gamma > 0$. In addition, by combining H\"older  and Sobolev inequalities, we obtain
\begin{eqnarray}
    \int\limits_{B_R} |u-h|^{\gamma}\, dx & \leq & C |B_R|^{1-\frac{\gamma}{2^{\ast}}}\left( \,\int\limits_{B_R} |u-h|^{2^{\ast}}\, dx \, \right)^{\frac{\gamma}{2^{\ast}}} \nonumber \\
& \leq & C |B_R|^{1-\frac{\gamma}{2^{\ast}}}\left( \displaystyle \int\limits_{B_R} |Du-Dh|^{2}\, dx \right)^{\frac{\gamma}{2}}, \label{trieste}
\end{eqnarray}
for $2^{\ast}=\dfrac{2n}{n-2}$. 

Therefore, using \Cref{ineq for harmonic replac}, together with \eqref{fermi} and \eqref{decay in terms of harmonic replac}, we get
\begin{equation}\label{Cariri7}
\int\limits_{B_R}|Du-Dh|^2 \, dx \leq C |B_R|^{\frac{2(2^{\ast}-\gamma)}{2^{\ast}(2-\gamma)}}=CR^{n+2\frac{\gamma}{2-\gamma}}.
\end{equation}
Finally, by taking 
$$
\epsilon=\frac{\gamma}{2-\gamma} \in (0,1)
$$
in \Cref{decay in terms of harmonic replac}, we conclude 
$$\int\limits_{B_r}|Du-(Du)_r|^2 \, dx $$
$$\leq C\left( \frac{r}{R}\right)^{n+2\frac{\gamma}{2-\gamma}} \int\limits_{B_R}|Du-(Du)_R|^2 \, dx + CR^{n+2\frac{\gamma}{2-\gamma}},
$$
for each $0<r \leq R$. Campanato's embedding theorem completes the proof.
\end{proof}

\section{Lipschitz regularity}\label{LIP-sct}

In this section, under a mere boundedness assumption on $\sigma$, we prove Lipschitz regularity along free boundary points of one-phase local minimizers of \eqref{main functional}. The main idea is that local minimizers are universally flat near free boundary points.

\begin{lemma}\label{flat lemma}
Given $\epsilon > 0$, there is $\delta>0$, depending only on $\epsilon$, such that for any $u$ nonnegative local minimizer of \eqref{main functional} in $B_1$, with 
$$
   u(0) = 0, \quad 0 \leq u \leq 1, \text{ and }\, \|\sigma\|_{L^\infty(\mathbb{R})} \leq \delta,
$$
there holds
$$
    \sup_{B_{1/2}} u \leq \epsilon.
$$
\end{lemma}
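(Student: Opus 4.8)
The plan is to argue by contradiction and compactness. Suppose the statement fails for some $\epsilon_0 > 0$: then there is a sequence of nonnegative local minimizers $u_j$ of $\mathcal{J}$ in $B_1$, with associated potentials $\sigma_j$ satisfying $\|\sigma_j\|_{L^\infty} \leq 1/j$, such that $u_j(0) = 0$, $0 \leq u_j \leq 1$, yet $\sup_{B_{1/2}} u_j > \epsilon_0$. The goal is to extract a limit that is a bounded nonnegative harmonic function vanishing at the origin, and then invoke the strong maximum principle (or Harnack) to force the limit to be identically zero on $B_{1/2}$, contradicting the lower bound $\epsilon_0$ that should pass to the limit.

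First I would establish compactness of the sequence $(u_j)$. Since $0 \leq u_j \leq 1$ and each $\sigma_j$ is uniformly bounded (by $1$, say), Proposition~\ref{interior holder regularity} (or Proposition~\ref{log lipschitz regularity}) gives a uniform $C^{0,\alpha}(B_{3/4})$ bound on $u_j$, so by Arzelà–Ascoli, up to a subsequence, $u_j \to u_\infty$ uniformly on $B_{1/2}$, with $u_\infty$ nonnegative, bounded by $1$, and $u_\infty(0) = 0$. One also gets $u_j \to u_\infty$ strongly in $L^2(B_{3/4})$ and weakly in $H^1(B_{3/4})$ (the energy bound $\int_{B_{3/4}} \tfrac12 |Du_j|^2 \leq \mathcal{J}(u_j, B_{3/4}) + \|\sigma_j\|_{L^\infty}|B_{3/4}|$ comes from comparing with a fixed competitor). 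The key step is to show $u_\infty$ is harmonic in $B_{3/4}$: for any ball $B_\rho \Subset B_{3/4}$, let $h_j$ be the harmonic replacement of $u_j$ in $B_\rho$; testing the minimality of $u_j$ against $h_j$ and using Lemma~\ref{ineq for harmonic replac} yields
$$
c \int_{B_\rho} |Du_j - Dh_j|^2 \, dx \leq 2\int_{B_\rho} \sigma_j(h_j) - \sigma_j(u_j)\, dx \leq 4 \|\sigma_j\|_{L^\infty} |B_\rho| \to 0.
$$
Combined with the $H^1$-weak convergence and the fact that harmonic replacements of an $L^2$-convergent sequence converge to the harmonic replacement of the limit, this shows $u_\infty$ equals its own harmonic replacement on every such ball, hence $u_\infty$ is harmonic in $B_{3/4}$.

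Then I would finish: $u_\infty$ is a nonnegative harmonic function on $B_{3/4}$ with an interior zero at the origin, so by the strong maximum principle $u_\infty \equiv 0$ on $B_{3/4}$, in particular on $B_{1/2}$. But uniform convergence gives $\sup_{B_{1/2}} u_\infty \geq \limsup_j \sup_{B_{1/2}} u_j \geq \epsilon_0 > 0$, a contradiction. The main obstacle I anticipate is the compactness/convergence bookkeeping — specifically, justifying that the harmonic replacements $h_j$ on a fixed ball converge (in $H^1$ and uniformly) to the harmonic replacement of $u_\infty$, which requires the strong $L^2$ (hence $H^{1/2}$-trace-type) control on the boundary data; this is where one must be slightly careful rather than in any deep estimate. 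An alternative, avoiding explicit compactness, would be a direct iteration using Lemma~\ref{flatland log lip}-style energy decay to show $u$ cannot grow from its zero value at a controlled geometric rate unless $\sigma$ is large, but the contradiction-compactness route is cleaner here.
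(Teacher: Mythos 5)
Your proposal is correct and follows essentially the same route as the paper: a contradiction-compactness argument using the uniform interior estimates (Log-Lipschitz/Hölder) for equicontinuity, identification of the limit as a nonnegative harmonic function vanishing at the origin, and the strong maximum principle to force the limit to be identically zero. The paper only sketches this, while you supply the harmonic-replacement details for showing the limit minimizes the Dirichlet energy; both are fine.
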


\begin{proof}
The proof is by compactness and is fairly classical, so we only outline it here. Assume the lemma is false. Then, there would be $\epsilon_0 > 0$ and a sequence $(u_k, \sigma_k)_{k \in \mathbb{N}}$, where $u_k$ is a minimizer to the functional
$$
    \mathcal{J}_k(v,B_1) = \int_{B_1} \frac{1}{2}|Dv|^2 + \sigma_k(v)\,dx,
$$
with $u_k(0) = 0$, $0 \leq u_k \leq 1$ in $B_1$, and $\|\sigma_k\|_{L^\infty(\mathbb{R})} \leq k^{-1}$, but
\begin{equation} \label{zhou}
    \sup_{B_{1/2}}u_k > \epsilon_0.
\end{equation}
It then follows that $\left( u_k \right)_k$ is equicontinuous by \Cref{log lipschitz regularity}, and converges to some function $u_\infty$ that minimizes
$$
    \mathcal{J}_\infty(v,B_1) \coloneqq \int_{B_1} \frac{1}{2}|Dv|^2\, dx,
$$
with $u_\infty \geq 0$ and $u_\infty(0) = 0$. By the strong maximum principle, this can only happen if $u_\infty \equiv 0$, which contradicts \eqref{zhou}.
\end{proof}

A careful iteration of the flatness lemma gives the Lipschitz continuity at the free boundary.

\begin{theorem}\label{Lip at FB}
Let $u$ be a nonnegative local minimizer of \eqref{main functional} in $B_1$ and $x_0 \in \partial \{u>0\} \cap B_{1/2}$. Then,
$$
    |u(x)| \leq C \left( \|u\|_{L^\infty(B_{3/4})} + \sqrt{\|\sigma\|_{L^\infty(\mathbb{R})}}\, \right) |x - x_0|, \quad x \in B_{1/4}(x_0), 
$$
for some universal constant $C>0$.
\end{theorem}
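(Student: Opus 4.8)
The plan is to prove the Lipschitz bound at the free boundary point $x_0$ by a dyadic iteration built on \Cref{flat lemma}, combined with the scaling properties of the functional discussed in \Cref{scaling of functional}. First I would reduce to a normalized situation: after a translation placing $x_0$ at the origin and a rescaling (using the scaling with $a=1/4$, say, to move into $B_1$), it suffices to show that a nonnegative local minimizer $u$ in $B_1$ with $u(0)=0$, $0\le u\le M:=\|u\|_{L^\infty(B_{3/4})}+\sqrt{\|\sigma\|_{L^\infty}}$ (after dividing by $M$, so $0\le u\le 1$) satisfies $\sup_{B_\rho}u \le C\rho$ for all small $\rho$. The key point is that after dividing by $M$, the rescaled potential $\tilde\sigma$ has $L^\infty$-norm $\le \|\sigma\|_{L^\infty}/M^2 \le 1$, and in fact we want it small enough to trigger \Cref{flat lemma}; this is where the $\sqrt{\|\sigma\|_{L^\infty}}$ term in the denominator does the work, since it forces $\|\sigma\|_{L^\infty}/M^2 \le 1$ and, more importantly, keeps the normalized potential's norm controlled through all the rescalings.

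The heart of the argument is the iteration. Fix $\epsilon = 1/2$ (or any fixed fraction $<1$, adjusted so that $2\epsilon \le$ the dyadic ratio) and let $\delta=\delta(\epsilon)$ be given by \Cref{flat lemma}; set $\epsilon=1/4$ so that the ratio is consistent with halving of scales. I would show by induction that $\sup_{B_{2^{-k}}} u \le 2^{-k}$ for all $k\ge 0$. The base case $k=0$ is the normalization $0\le u\le 1$. For the inductive step, suppose $\sup_{B_{2^{-k}}}u \le 2^{-k}$. Define the rescaled function
$$
u_k(x) \coloneqq \frac{u(2^{-k}x)}{2^{-k}}, \qquad x\in B_1,
$$
which, by the scaling feature with $a=b=2^{-k}$, is a nonnegative local minimizer of a functional of the same type with potential $\tilde\sigma_k(x,t) = \sigma(2^{-k}x, 2^{-k}t)$; crucially $\|\tilde\sigma_k\|_{L^\infty}\le \|\sigma\|_{L^\infty} \le \delta$ (having arranged this in the normalization step), and $u_k(0)=0$, $0\le u_k\le 1$ by the inductive hypothesis. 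Applying \Cref{flat lemma} gives $\sup_{B_{1/2}}u_k \le \epsilon = 1/4 \le 1/2$, which rescales back to $\sup_{B_{2^{-(k+1)}}}u \le \frac12\cdot 2^{-k} = 2^{-(k+1)}$, closing the induction. From the geometric decay $\sup_{B_{2^{-k}}}u \le 2^{-k}$, a standard interpolation across dyadic annuli yields $u(x)\le C|x|$ for all $x\in B_{1/2}$, and undoing the normalizations restores the factor $M$ and gives the claimed estimate on $B_{1/4}(x_0)$.

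The main obstacle I anticipate is bookkeeping the smallness of the potential through the normalization and the rescalings simultaneously. One must divide by $M = \|u\|_{L^\infty(B_{3/4})} + \sqrt{\|\sigma\|_{L^\infty}}$ to both normalize $u$ to have sup at most $1$ and to make the normalized potential small: the new potential is $M^{-2}\sigma(\cdot, M\,\cdot)$ on the relevant ball, with norm $\le \|\sigma\|_{L^\infty}/M^2 \le \|\sigma\|_{L^\infty}/\|\sigma\|_{L^\infty} = 1$ — but \Cref{flat lemma} needs norm $\le\delta$, not $\le 1$. The fix is to instead rescale spatially first: apply the functional's scaling with a small spatial factor $a=\rho_0$ (universal, depending on $\delta$) so that $\tilde\sigma(x,t)=\rho_0^2 M^{-2}\sigma(\rho_0 x, Mt)$ picks up the extra factor $\rho_0^2$, which can be made $\le \delta$ by choosing $\rho_0$ universally small; this costs only a universal constant in the final estimate. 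A secondary subtlety is that \Cref{flat lemma} as stated requires $u(0)=0$, which is guaranteed since $x_0\in\partial\{u>0\}$ and $u$ is continuous (by \Cref{log lipschitz regularity}), so $u(x_0)=0$. These are routine once the order of operations is fixed, and I would present them cleanly in that order.
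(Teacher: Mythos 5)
Your proposal is correct and follows essentially the same route as the paper: normalize so that \Cref{flat lemma} applies, then iterate it dyadically via the scaling of \Cref{scaling of functional} to get $\sup_{B_{2^{-k}}}u\le 2^{-k}$. The only (cosmetic) difference is how the smallness $\|\tilde\sigma\|_{L^\infty}\le\delta$ is arranged -- you insert a universal spatial factor $\rho_0\le\sqrt{\delta}$, whereas the paper divides by $\|u\|_{L^\infty(B_{3/4})}+\sqrt{\delta^{-1}\|\sigma\|_{L^\infty}}$; both are absorbed into the universal constant $C$.
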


\begin{proof}
Fix $x_0 \in \partial \{u>0\} \cap B_{1/2}$. Let $\epsilon = 2^{-1}$ and take the corresponding $\delta >0$ given by \Cref{flat lemma}. Up to replacing $u$ by
$$
    v(x) \coloneqq \frac{u(x_0+2^{-1}x)}{\|u\|_{L^\infty(B_{3/4})} + \sqrt{\delta^{-1}\|\sigma\|_{L^\infty(\mathbb{R})}}}, \quad x \in B_1,
$$
we may assume $x_0 = 0$, and
$$
   u(0) = 0, \quad 0 \leq u \leq 1, \text{ and }\, \|\sigma\|_{L^\infty(\mathbb{R})} \leq \delta.
$$
We claim
$$
    \sup_{B_{2^{-k}}} u \leq 2^{-k}, \quad \text{for all} \quad k \in \mathbb{N}_0. 
$$
It is clear that once the claim holds, $u$ is Lipschitz regular at $x=0$.

To prove the claim, we proceed by induction. The case $k = 0$ is trivial since $u$ is normalized. We then assume it holds for $k$ and prove the same is true for $k+1$. Define
$$
    v_k(x) \coloneqq 2^ku(2^{-k}x), \quad x \in B_1.
$$
By the scaling feature of the functional (see \Cref{scaling of functional}), $v_k$ minimizes a functional of the same type, with its corresponding $\sigma$ factor still in a smallness regime. Moreover, $v_k$ is normalized by the induction assumption and $v_k(0) = 0$. We can then use \Cref{flat lemma} to obtain
$$
    \sup_{B_{1/2}}v_k \leq 2^{-1},
$$
from which the induction step follows. The claim is then proven.
\end{proof}

\begin{remark}
We comment that once gradient bounds are established along the free boundary, one can often obtain local Lipschitz regularity, provided the governing PDE, $\Delta u = \sigma'(u)$ in $\{u > 0\}$, allows for such smoothness. This is the case of the classical Alt--Caffarelli functional; see the details in \cite{PT16}.
\end{remark}

\begin{remark}
We emphasize that, given the generality of our results, the free boundary 
$$
    \Gamma \coloneqq \partial \left [u^{-1}(\mathbb{R} \setminus \mathscr{S}) \right ] \cap \Omega
$$
does not necessarily coincide with the set $\partial\{u>0\}$. Such an identification can only be guaranteed in special cases, namely when $\sigma (\cdot)$ is differentiable for $t>0$ and has a singularity at the origin. This scenario includes classical models such as the obstacle problem, and the quenching and cavity problems in a one-phase setting.
\end{remark}

\section{$C^1-$regularity}\label{C1 - sct}

In this section, we show that if the function $\sigma$ in the functional \eqref{main functional} is continuous, then local minimizers exhibit $C^1-$regularity at free boundary points. Moreover, we construct a $C^1-$modulus of continuity for $u$ depending only on the modulus of continuity of $\sigma$. Our approach employs a nonlinear renormalization algorithm for solutions, inspired by the methodology developed in \cite{APPT}, albeit applied in a substantially different context.

\begin{theorem}\label{C1 regularity at free boundary}
Let $u$ be a nonnegative local minimizer of \eqref{main functional} in $B_1$, with $\sigma$ being a modulus of continuity. Then, there exists a modulus of continuity $\omega \colon [0,1] \to [0,+\infty)$, depending on universal parameters and $\sigma$, such that, for any $x_0 \in \partial \{u>0\}\cap B_{1/2}$, 
$$
    |u(x)| \leq C\left(\|u\|_{L^\infty(B_{3/4})} + \sqrt{\|\sigma\|_{L^\infty(\mathbb{R})}} \, \right)|x-x_0|\,\omega(|x-x_0|), \quad x \in B_{1/4}(x_0),
$$
for some universal constant $C>0$.
\end{theorem}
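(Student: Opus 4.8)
The plan is to upgrade the Lipschitz estimate of \Cref{Lip at FB} to a $C^1$ estimate by iterating an \emph{improvement of flatness} step in which the renormalization factor is itself adjusted at each dyadic scale in a nonlinear fashion. Fix $x_0 \in \partial\{u>0\}\cap B_{1/2}$ and, as in \Cref{Lip at FB}, normalize so that $x_0=0$, $0\le u\le 1$, $u(0)=0$, and $\|\sigma\|_{L^\infty}\le\delta$ is as small as we please. Writing $\tau$ for the modulus of continuity of $\sigma$ (with $\sigma(0)=0$ after subtracting a constant, which does not change the minimization), the goal is to produce a sequence of \emph{slopes} (here a scalar suffices by one-phase Lipschitz flatness, but a linear function $\ell_k(x)=b_k\cdot x$ works in general) and a decreasing sequence of \emph{flatness levels} $\lambda_k\downarrow 0$ such that, with $v_k(x):=\lambda_k^{-1}2^{k}u(2^{-k}x)$ one has $0\le v_k\le 1$ on $B_1$ and $v_k$ minimizes a functional of the same type whose potential has $L^\infty$-norm controlled by $\lambda_k^{-2}2^{-2k}\|\sigma\|_{L^\infty}$ — crucially, a quantity that we must keep small. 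The modulus $\omega$ will be read off from how fast the $\lambda_k$ decay relative to $2^{-k}$.

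The key analytic input is a quantitative flattening lemma refining \Cref{flat lemma}: if $w$ is a nonnegative local minimizer in $B_1$ with $w(0)=0$, $0\le w\le 1$, and $\|\sigma_w\|_{L^\infty}\le\eta$, then $\sup_{B_{1/2}} w\le \tfrac12 + C\sqrt{\eta}$, or more to the point, $w$ is within $C(\sqrt{\eta}+\text{o}(1))$ in $L^2(B_{1/2})$ of a global one-phase profile — and by the Lipschitz bound already available, of a \emph{half-plane solution} $(\gamma\, x_n)_+$ — with the error measured by the smallness of the potential. Combined with the scaling computation of \Cref{scaling of functional} and \Cref{equation for rescaled function}, passing from scale $2^{-k}$ to $2^{-k-1}$ contracts the $L^2$-distance to a half-plane solution by a geometric factor $\theta\in(0,1)$ coming from the harmonic-replacement decay (\Cref{decay in terms of harmonic replac}), \emph{plus} an additive error of size $C\sqrt{\lambda_k^{-2}2^{-2k}\|\sigma\|_{L^\infty}}$. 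The nonlinearity enters in the bookkeeping: at each step we are free to renormalize by the current flatness $\lambda_k$ rather than by $2^{-k}$, which keeps $v_k$ normalized but \emph{enlarges} the effective potential norm; choosing $\lambda_{k+1}=\theta\lambda_k + C\sqrt{\lambda_k^{-2}2^{-2k}\delta}$ (schematically) closes the induction provided $\delta$ is small and, at the first scale where $\lambda_k 2^{k}$ would threaten to blow up, we invoke the continuity of $\sigma$ — which guarantees $\tau(s)\to0$ — to re-enter the smallness regime. This is exactly the mechanism advertised in the introduction: the continuity of $\sigma$, with \emph{no} rate assumed, is what lets the scheme be restarted at deeper scales, and the resulting $\omega$ is built by concatenating the contraction factors with the (a priori arbitrarily slow) decay of $\tau$.

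Concretely, I would carry out the proof in the following order. First, state and prove the quantitative flattening lemma by compactness-plus-stability: suppose not, extract a sequence of minimizers with potentials of norm $\to0$ whose $L^2$-distance to the set of half-plane solutions stays bounded below; by \Cref{log lipschitz regularity} (in fact \Cref{Lip at FB}) they are precompact, the limit is a nonnegative harmonic function vanishing at an interior point of a one-phase configuration, hence a half-plane solution, contradiction; then make the statement quantitative by a standard $\varepsilon$–$\delta$ translation. Second, set up the renormalization: define $v_k$, $\lambda_k$, $b_k$, track via \Cref{scaling of functional} the potential of the rescaled functional, and verify the two induction hypotheses $\|v_k\|_{L^\infty(B_1)}\le1$ and $\mathrm{dist}_{L^2(B_1)}(v_k,\text{half-planes})\le$ (a summable sequence). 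Third, run the iteration, using the flattening lemma at each step and the continuity of $\sigma$ to maintain smallness, obtaining $|b_{k+1}-b_k|\le C\lambda_k$ and $\|u-b_k\cdot x\|_{L^\infty(B_{2^{-k}})}\le C\lambda_k 2^{-k}$. Fourth, sum: $b_k\to b_\infty$, and $|u(x)-b_\infty\cdot x|\le C\,\omega(|x|)\,|x|$ on $B_{1/4}$ where $\omega(r):=$ the tail $\sum_{2^{-k}\le r}\lambda_k/2^{-k}\cdot(\dots)$, which one checks is a genuine modulus of continuity (monotone, $\to0$ at $0$) precisely because $\lambda_k 2^{k}\to0$; at a free boundary point $b_\infty$ must be $0$ by one-phase considerations, giving the stated estimate after undoing the normalization.

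The main obstacle I anticipate is the bookkeeping of the nonlinear renormalization — specifically, proving that the coupled recursion for $(\lambda_k,\text{effective potential norm})$ can be closed so that $\lambda_k$ decays strictly faster than the flattening contraction would naively allow, \emph{while} the effective potential norm $\lambda_k^{-2}2^{-2k}\|\sigma\|_{L^\infty}$ does not escape the regime of validity of the flattening lemma. This is where the argument genuinely uses that $\sigma$ is continuous rather than merely bounded (boundedness only gave the Lipschitz bound of \Cref{Lip at FB}): one must periodically "spend" a factor of $\tau(2^{-k})\to0$ to compensate, and threading this through without assuming any rate on $\tau$ — so that $\omega$ comes out depending on $\tau$ in the intricate, non-explicit way promised — is the delicate heart of the proof. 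A secondary technical point is ensuring the half-plane solutions form a closed set under $L^2$ convergence with uniformly controlled slope, so that the compactness step in the flattening lemma produces a limit of the asserted form; this should follow from \Cref{Lip at FB} applied uniformly along the sequence together with the classification of global one-phase minimizers with linear growth.
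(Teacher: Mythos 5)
There is a genuine gap, and it sits in your key lemma. In the compactness step you claim the limit is ``a nonnegative harmonic function vanishing at an interior point \dots hence a half-plane solution.'' But a nonnegative harmonic function in $B_1$ vanishing at an interior point is \emph{identically zero} by the strong maximum principle, and the profiles $(\gamma\, x_n)_+$ with $\gamma\neq 0$ are neither harmonic nor minimizers of the pure Dirichlet energy (their harmonic replacement has strictly smaller energy). So the correct conclusion of the compactness argument is not ``close to a half-plane'' but ``uniformly small,'' which is exactly \Cref{flat lemma}: $\sup_{B_{1/2}}u\le\epsilon$ for \emph{any} prescribed $\epsilon$, in particular $\epsilon<1/2$, i.e.\ a superlinear gain at each halving of the radius. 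This is why no slopes $b_k$ need to be tracked and why the limiting linear part is automatically zero; your entire ``improvement of flatness toward half-plane solutions'' template (appropriate for Alt--Caffarelli, where the potential does not vanish in the blow-up) is the wrong engine here, and the stability statement ``within $C\sqrt{\eta}$ of a half-plane'' that your recursion $\lambda_{k+1}=\theta\lambda_k+C\sqrt{\cdots}$ relies on is not established and not what is true.

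The second, related problem is the bookkeeping of the rescaled potential. With $v_k(x)=\lambda_k^{-1}2^{k}u(2^{-k}x)$, the scaling of \Cref{scaling of functional} (with $a=2^{-k}$, $b=\lambda_k 2^{-k}$) produces the potential $\lambda_k^{-2}\sigma(\lambda_k 2^{-k}t)$; since $0\le v_k\le 1$, the relevant quantity is $\lambda_k^{-2}\sigma(\lambda_k 2^{-k})$ --- not $\lambda_k^{-2}2^{-2k}\|\sigma\|_{L^\infty(\mathbb{R})}$, which either trivializes (with the spurious $2^{-2k}$) or blows up (without it). The whole point is that $\sigma$ is only ever evaluated at arguments of size $\lambda_k2^{-k}\to0$, and the scheme closes by choosing the per-step factor $\mu_{k+1}=\lambda_{k+1}/\lambda_k$ via the Intermediate Value Theorem so that $\lambda_{k+1}^{-2}\sigma(\lambda_{k+1}2^{-k-1})$ sits \emph{exactly} at the threshold $\delta$ of \Cref{flat lemma} whenever it cannot be kept below it for free; one then proves $\lambda_k=\prod\mu_i\to0$ by passing to the limit in the identity $\lambda_{k_j}^{-2}\sigma(\lambda_{k_j}2^{-k_j})=\delta$ and using only $\sigma(0)=0$ and continuity. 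Your proposal gestures at ``spending a factor of $\tau(2^{-k})$'' but never identifies this quantity or the selection mechanism, and without it the induction hypothesis ``effective potential $\le\delta$'' cannot be propagated. The modulus $\omega$ is then simply $\omega(\rho)\approx\prod_{i\le\log_2(\rho^{-1})}\mu_i$, read off from the supremum decay $\sup_{B_{2^{-k}}}u\le 2^{-k}\lambda_k$ --- no harmonic-replacement contraction factor $\theta$, no Campanato-type summation, and no classification of global one-phase minimizers is needed.
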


\begin{proof}
Let $x_0 \in \partial \{u>0\} \cap B_{1/2}$ be an arbitrary point. Let $\epsilon = 2^{-2}$ in \Cref{flat lemma}, and consider the corresponding $\delta >0$. Up to replacing $u$ by
$$
    v(x) \coloneqq \frac{u(x_0 + 2^{-1}x)}{\|u\|_{L^\infty(B_{3/4})} + \sqrt{\delta^{-1}\|\sigma\|_{L^\infty(\mathbb{R})}}}, \quad x \in B_1,
$$
we may assume $u$ satisfies the assumption of \Cref{flat lemma} for this specific $\epsilon$.

We will prove that there is a sequence $(\mu_k)_{k \in \mathbb{N}} \subset (0,1)$ such that
$$
    \sup_{B_{2^{-k}}}u \leq 2^{-k}\left(\prod_{i=1}^{k}\mu_i\right), \quad \text{for every }\, k \in \mathbb{N}.
$$
This sequence will be built via an induction process. The first step, as we already are in a normalized setting, is to apply \Cref{flat lemma} to obtain
\begin{equation}\label{normalization step 1}
    \sup_{B_{2^{-1}}} u \leq  2^{-2} \leq 2^{-1} \mu_1,
\end{equation}
for some $2^{-1} \leq \mu_1 < 1$ to be chosen. To fix it, define the function
$$
    v_1(x) \coloneqq \frac{u\left(2^{-1}x\right)}{\mu_1 2^{-1}}, \quad x \in B_1,
$$
which is normalized due to \eqref{normalization step 1}. Moreover, $v_1$ is a local minimizer to
$$
    \mathcal{J}_1(w, B_1) \coloneqq \int_{B_1}\frac{1}{2}|Dw|^2 + \sigma_1(w)\, dx, \quad \text{with} 
    \quad \sigma_1(t) \coloneqq \mu_1^{-2}\sigma\left(\mu_1 2^{-1} t\right).
$$
At this point, we desire to apply \Cref{flat lemma} to $v_1$. As $v_1(0) = 0$, the only missing assumption is the smallness condition for $\sigma_1$. As $\sigma_1$ is a modulus of continuity, the smallness condition translates to ensuring $\sigma_1(1) \leq \delta$. This, in turn, is equivalent to
$$
    \mu_1^{-2}\sigma\left(\mu_1 2^{-1}\right) \leq \delta.
$$
To make this selection in a proper way, we build the following algorithm: if 
$$
   2^2\sigma\left(2^{-2}\right) \leq \delta,
$$
we pick $\mu_1 = 2^{-1}$. If this does not happen, then we pick $1 > \mu_1 > 2^{-1}$ so that
$$
    \mu_1^{-2}\sigma(\mu_1 2^{-1}) = \delta.
$$
Indeed, define
$$
    f(t) \coloneqq t^{-2} \sigma(2^{-1}t), \quad \text{for }\, t \in [2^{-1},1].
$$
This is a continuous function, such that 
$$f(1) = \sigma(2^{-1}) < \sigma(1) \leq \delta < f(2^{-1}),$$
and so, by the Intermediate Value Theorem, there should be $1 > \mu_1 > 2^{-1}$ such that 
$$
    f(\mu_1) = \delta.
$$
Now, we can apply \Cref{flat lemma} to $v_1$ to obtain
$$
    \sup_{B_{2^{-1}}} v_1 \leq 2^{-2} \ \Longrightarrow \ \sup_{B_{2^{-2}}} u \leq 2^{-2} \mu_1 \mu_2,
$$
for $\mu_1 \leq \mu_2 < 1$ to be chosen. Next, define
$$
    v_2(x) \coloneqq \frac{v_1\left(2^{-1}x\right)}{\mu_2 2^{-1}}, \quad x \in B_1.
$$
Again, it follows that $v_2$ is normalized, $v_2(0) = 0$ and it is a local minimizer of
$$
    \mathcal{J}_2(w,B_1) \coloneqq \int_{B_1} \frac12 |Dw|^2 + \sigma_2(w)dx, \quad \text{with} \quad \sigma_2(t) \coloneqq \mu_2^{-2}\sigma_1\left(\mu_2 2^{-1} t\right).
$$
We then follow the same routine: if
$$
    \mu_1^{-2}\sigma_1\left(\mu_1 2^{-1} \right) \leq \delta,
$$
then we pick $\mu_2 = \mu_1$. If not, then we pick $1 > \mu_2 > \mu_1$ so that
$$
   \mu_2^{-2}\sigma_1\left(\mu_2 2^{-1}\right) = \delta. 
$$
We can repeat this process inductively: we assume we have built up to $k$, that is, we have already found the parameter $\mu_k$ and the functions $\sigma_k$ and $v_k$ are already defined. For a parameter $1 > \mu_{k+1} \geq \mu_k$, we define the function
$$
    v_{k+1}(x) \coloneqq \frac{v_{k}(2^{-1}x)}{\mu_{k+1}2^{-1}}, \quad x \in B_1,
$$
which is normalized, $v_{k+1}(0) = 0$ and minimizes a functional with $\sigma_{k+1}(t) \coloneqq \mu_{k+1}^{-2}\sigma_k(\mu_{k+1}2^{-1}t)$. The choice of $\mu_{k+1}$ is as follows: if
$$
    \mu_{k}^{-2}\sigma_k(\mu_{k}2^{-1}) \leq \delta,
$$
then we pick $\mu_{k+1} = \mu_k$. If not, then we pick $\mu_{k+1}$ such that
$$
    \mu_{k+1}^{-2}\sigma_k\left(\mu_{k+1} 2^{-1}\right) = \delta.
$$
This concludes the construction of the sequence of parameters $\mu_k$, and so we have proven that
$$
    \sup_{B_{2^{-k}}}u \leq 2^{-k}\left(\prod_{i=1}^{k}\mu_i\right), \quad \text{for every }\, k \in \mathbb{N}.
$$
Define
$$
    a_k \coloneqq \prod_{i=1}^{k}\mu_i.
$$
By construction, $(a_k)_{k \in \mathbb{N}} \subset (0,1)$ and is decreasing. As a consequence, $a_k \to a_{\ast}$ for some $a_{\ast} \in [0,1)$. We prove that $a_{\ast} = 0$. Indeed, if after some point, the sequence stabilizes, meaning that
$$
    \mu_k = \mu_{k_0}, \quad \text{for all }\, k \geq k_0,
$$
then it readily follows that $a_{\ast} = 0$. Otherwise, we would have, for some subsequence $(k_j)_{j \in \mathbb{N}}$, that
$$
    a_{k_j}^{-2}\sigma(a_{k_j}2^{-k_j}) = \delta.
$$
Since $a_k \to a_{\ast}$, it also follows that $a_{k_j} \to a_{\ast}$, and so, passing to the limit in the equation above, gives
$$
    \sigma(a_{\ast} \cdot 0) = \delta a_{\ast}^2.
$$
But since $\sigma(0) = 0$, it follows that $a_{\ast} = 0$. 

From here, we can build the modulus of continuity as follows: pick $x \in B_{2^{-2}}$ and let $\rho = |x|$. There exists $k \in \mathbb{N}$ such that $2^{-(k+1)} < \rho \leq 2^{-k}$. Therefore, 
$$
    |u(x)| \leq \sup_{B_{\rho}} u \leq \sup_{B_{2^{-k}}}u \leq 2^{-k} a_k \leq 2\rho a_k,
$$
since $2^{-k} \leq 2\rho$. Taking into account that
$$
    k \leq \frac{\ln\left(\rho^{-1}\right)}{\ln(2)} < k+1,
$$
we may then define
$$
     \omega(\rho) \coloneqq 2 \prod_{i=1}^{\left\lfloor{\frac{\ln(\rho^{-1})}{\ln(2)}}\right\rfloor} \mu_i.
$$
Observe that $\omega$ is an increasing function and 
$$
    \lim_{\rho \to 0} \omega(\rho) = 2 \lim_{k \to \infty} a_k = 0.
$$
\end{proof}

\begin{remark} \label{rem}
We observe that if the $\sigma-$function is H\"older continuous, the modulus of continuity $\omega$ that we obtain in the previous theorem is
$$
    \omega(t) \approx t^{\frac{\gamma}{2-\gamma}},
$$
which is precisely the same modulus of continuity expected for the Alt--Phillips problem, see \Cref{holder sigma}. Indeed, say $\sigma(t) = \delta t^{\gamma}$ for simplicity, with the same $\delta$ of the previous proof. The first step of the algorithm is to select $\mu_1 \in [2^{-1},1)$ such that
$$
    \mu_1^{-2} \sigma(\mu_1 2^{-1}) \leq \delta,
$$
which is equivalent to
$$
    \mu_1^{-2} (\mu_1 2^{-1})^\gamma \leq 1.
$$
Isolating $\mu_1$ leads to
$$
    \mu_1^{-2+\gamma} \leq 2^\gamma.
$$
In particular,
$$
    \mu_1 = \left(2^{-1}\right)^{\frac{\gamma}{2-\gamma}}
$$
is a valid choice. This is the case where the sequence stabilizes and gives $\mu_k = \mu_1$, for every $k \in \mathbb{N}$. As a consequence, 
$$
    \prod_{i=1}^k \mu_i = \left(2^{-k}\right)^{\frac{\gamma}{2-\gamma}}, \quad \text{for every }\, k \in \mathbb{N},
$$
from which follows that $\omega(t) \approx t^{\frac{\gamma}{2-\gamma}}$. We point out that this goes beyond the $C^{1,\alpha}$ threshold. Indeed, if $\gamma > 1$, then
$$
    \frac{\gamma}{2-\gamma} > 1,
$$
which in particular implies the Hessian should also vanish at free boundary points.
\end{remark}

As pointed out in \Cref{rem}, the regularity regime of the minimizers at the free boundary varies according to the amount of smoothness $\sigma$ possesses. A corollary of the analysis carried out in \Cref{C1 regularity at free boundary} gives that whenever the modulus of continuity $\sigma$ is of order $\mathrm{o} (t^k )$, for $t>0$ and $k \in \mathbb{N}$, then solutions will be of class $C^{k+1}$, with a modulus of continuity, at the free boundary. Below, we consider the case $k=1$ to convey the idea.

\begin{proposition}
Let $u$ be a nonnegative local minimizer of \eqref{main functional} in $B_1$, with $\sigma$ being a modulus of continuity and satisfying
$\sigma(t) = \mathrm{o}(t)$, as $t \to 0$. Then, there exists a modulus of continuity $\omega \colon [0,1] \to [0,+\infty)$, depending on universal parameters and the modulus of continuity of $\sigma$, such that, for any $x_0 \in \partial \{u>0\}\cap B_{1/2}$, 
$$
    |u(x)| \leq C\left(\|u\|_{L^\infty(B_1)} + \sqrt{\|\sigma\|_{L^\infty(\mathbb{R})}}\, \right)|x-x_0|^2\,\omega(|x-x_0|), \quad x \in B_{1/4}(x_0),
$$
for some universal constant $C>0$.
\end{proposition}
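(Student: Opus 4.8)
The plan is to rerun the nonlinear renormalization scheme of \Cref{C1 regularity at free boundary}, but with the \emph{linear} value scaling $2^{-1}$ replaced by the \emph{quadratic} one $2^{-2}$, using the hypothesis $\sigma(t)=\mathrm{o}(t)$ exactly where the continuity of $\sigma$ was used there. Fix $x_0\in\partial\{u>0\}\cap B_{1/2}$, apply \Cref{flat lemma} with $\epsilon=2^{-3}$ to obtain the corresponding $\delta>0$, and perform the usual preliminary reduction (via the scaling feature of \Cref{scaling of functional}, replacing $u$ by a suitable multiple of $u(x_0+2^{-1}x)$) so that we may assume $u(0)=0$, $0\le u\le 1$ and $\|\sigma\|_{L^\infty(\mathbb{R})}\le\delta$. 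The goal is then to construct a nondecreasing sequence $(\mu_k)_{k\in\mathbb{N}}\subset[2^{-1},1)$ with
$$
    \sup_{B_{2^{-k}}}u\ \le\ 2^{-2k}\prod_{i=1}^{k}\mu_i,\qquad k\in\mathbb{N}.
$$

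The sequence is built by the same induction as in \Cref{C1 regularity at free boundary}. Given $v_k$, normalized on $B_1$, vanishing at the origin, and a local minimizer of a functional of the form \eqref{main functional} with potential a modulus of continuity $\sigma_k$ satisfying $\sigma_k(1)\le\delta$, one applies \Cref{flat lemma} to get $\sup_{B_{1/2}}v_k\le 2^{-3}$ and sets
$$
    v_{k+1}(x)\ \coloneqq\ \frac{v_k(2^{-1}x)}{\mu_{k+1}\,2^{-2}},\qquad x\in B_1,
$$
which by \Cref{scaling of functional} minimizes a functional with potential $\sigma_{k+1}(t)=4\mu_{k+1}^{-2}\sigma_k(\mu_{k+1}2^{-2}t)$ and is normalized provided $\mu_{k+1}\ge 2^{-1}$. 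One chooses $\mu_{k+1}\in[\mu_k,1)$ by the same dichotomy as in \Cref{C1 regularity at free boundary}: if the rescaled potential is already small, i.e. $4\mu_k^{-2}\sigma_k(\mu_k2^{-2})\le\delta$, keep $\mu_{k+1}=\mu_k$; otherwise invoke the Intermediate Value Theorem to pick $\mu_{k+1}$ with $\sigma_{k+1}(1)=\delta$. Unwinding the construction yields $\sigma_k(t)=4^k a_k^{-2}\sigma(2^{-2k}a_k t)$, with $a_k:=\prod_{i=1}^{k}\mu_i$, and the displayed quadratic decay of $\sup_{B_{2^{-k}}}u$ follows by applying the flatness estimate at each level.

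Since $a_k$ is decreasing, $a_k\to a_\ast\in[0,1)$, and one checks $a_\ast=0$ exactly as in \Cref{C1 regularity at free boundary}: if the scheme is eventually stationary, $a_k$ decays geometrically; if not, along the non-stationary indices $k_j$ one has $\sigma_{k_j}(1)=\delta$, which after dividing by $2^{-2k_j}a_{k_j}$ reads
$$
    \frac{\sigma\!\left(2^{-2k_j}a_{k_j}\right)}{2^{-2k_j}a_{k_j}}\ =\ \delta\,a_{k_j},
$$
and letting $j\to\infty$ the left-hand side goes to $0$ by $\sigma(t)=\mathrm{o}(t)$, forcing $a_\ast=0$. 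The modulus $\omega$ is then read off as in \Cref{C1 regularity at free boundary}: for $2^{-(k+1)}<\rho\le 2^{-k}$ and $x\in B_\rho$ one gets $|u(x)|\le\sup_{B_{2^{-k}}}u\le 2^{-2k}a_k\le 4\rho^2 a_k$, so one may take $\omega(\rho):=4\prod_{i=1}^{\left\lfloor\frac{\ln(\rho^{-1})}{\ln 2}\right\rfloor}\mu_i$, an increasing function with $\omega(0^+)=4\lim_k a_k=0$; undoing the preliminary normalization reinstates the universal constant and the factor $\|u\|_{L^\infty(B_1)}+\sqrt{\|\sigma\|_{L^\infty(\mathbb{R})}}$. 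The higher order statement $\sigma(t)=\mathrm{o}(t^m)\Rightarrow C^{m+1}$ at the free boundary follows identically with $2^{-2}$ replaced by $2^{-(m+1)}$.

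I expect the main obstacle to be the solvability of the renormalization step at \emph{every} scale — the place where $\sigma(t)=\mathrm{o}(t)$ is genuinely indispensable and the mere continuity used for $C^1$ no longer suffices. The quadratic rescaling inflates the potential by the factor $4\mu_{k+1}^{-2}\ge 4$ per step (against only $\mu_{k+1}^{-2}$ in the $C^1$ scheme), so the bare monotonicity bound $\sigma_k(2^{-2})\le\sigma_k(1)\le\delta$ does not by itself certify that the Intermediate Value Theorem applies; one must control $\sigma_k(2^{-2})=4^k a_k^{-2}\sigma(2^{-2(k+1)}a_k)$ against $\delta$ directly, and this forces $\sigma$ to grow strictly faster than linearly near the origin. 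The extra decay $\sigma(t)/t\to 0$ is exactly what absorbs this amplification at the ever-smaller scales $2^{-2k}a_k$ seen by $\sigma_k$; in particular, after a harmless preliminary rescaling one may assume $\sigma(t)\le\eta\,t$ on $[0,1]$ for a prescribed small $\eta$, which keeps $\sigma_k$ in the required smallness regime and keeps the renormalization coherent across dyadic scales. Once this is secured, the remainder is a faithful transcription of the proof of \Cref{C1 regularity at free boundary}.
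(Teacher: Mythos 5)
Your proposal follows the same route as the paper: rerun the renormalization scheme of \Cref{C1 regularity at free boundary} with the value scaling $2^{-1}$ replaced by $2^{-2}$, so that the rescaled potential becomes $\sigma_{k+1}(t)=4\mu_{k+1}^{-2}\sigma_k(\mu_{k+1}2^{-2}t)$, and use $\sigma(t)=\mathrm{o}(t)$ to guarantee that the Intermediate Value Theorem step remains solvable. You also correctly identify the crux: because of the extra factor $4$, the monotonicity bound $\sigma_k(2^{-2})\le\sigma_k(1)\le\delta$ no longer yields $f_k(1)\le\delta$, which is exactly the point the paper addresses (tersely) by invoking $\sigma=\mathrm{o}(t)$.

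However, the specific mechanism you propose to close this step does not work as stated. Reducing to $\sigma(t)\le\eta t$ on $[0,1]$ gives, after unwinding $\sigma_k(t)=4^ka_k^{-2}\sigma(4^{-k}a_kt)$,
$$
f_k(1)=4\,\sigma_k(2^{-2})=4^{k+1}a_k^{-2}\,\sigma\!\left(4^{-(k+1)}a_k\right)\ \le\ \eta\,a_k^{-1},
$$
and $a_k^{-1}$ is unbounded precisely when the scheme succeeds ($a_k\to0$); so no fixed preliminary choice of $\eta$ keeps $f_k(1)\le\delta$ at every scale. The bound you actually need is $f_k(1)\le\sigma_k(1)\le\delta$, and writing $f_k(1)=a_k^{-1}\sigma(s_k)/s_k$ with $s_k=4^{-(k+1)}a_k$ versus $\sigma_k(1)=a_k^{-1}\sigma(4s_k)/(4s_k)$, this requires $\sigma(s)/s$ to be nondecreasing, which a general modulus of continuity with $\sigma(t)=\mathrm{o}(t)$ need not satisfy. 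The gap is fixable: replace $\sigma$ by the dominating modulus $\tilde\sigma(s)\coloneqq s\sup_{0<\tau\le s}\sigma(\tau)/\tau$, which is nondecreasing, still $\mathrm{o}(s)$, and satisfies $\tilde\sigma\ge\sigma$ (so the smallness needed in \Cref{flat lemma} for the true potentials $\sigma_k$ follows from running the selection algorithm on $\tilde\sigma$); the preliminary rescaling then normalizes $\sup_{0<s\le1}\tilde\sigma(s)/s\le\delta$, and the monotonicity of $\tilde\sigma(s)/s$ propagates $f_k(1)\le\tilde\sigma_k(1)\le\delta$ through the induction. With this correction the rest of your argument, including the proof that $a_\ast=0$ and the construction of $\omega$, goes through as written.
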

 \begin{proof}
The proof follows along the lines of that of \Cref{C1 regularity at free boundary}, the only significant change being the choice of 
$$
    f(t) \coloneqq t^{-2} 2^2\sigma(2^{-2}t), \quad \text{for }\, t \in [2^{-1},1].
$$
To ensure $f(1) < \delta$, we use the fact that $\sigma = o(t)$. The rest of the proof goes through with the obvious modifications, by taking into account that we are in a $C^2-$regularity regime.
\end{proof}

\bigskip

{\small \noindent{\bf Acknowledgments.} This publication is based upon work supported by King Abdullah University of Science and Technology (KAUST) under Award No. ORFS-CRG12-2024-6430. JMU is partially supported by UID/00324 - Centre for Mathematics of the University of Coimbra.}

\bigskip

\bibliographystyle{amsplain, amsalpha}

\end{document}